\patchcmd{\@setaddresses}{\scshape\ignorespaces}{\ignorespaces}{}{} 
\appto\maketitle{%
\let\@makefnmark\relax  \let\@thefnmark\relax
\ifx\@empty\addresses\else\@footnotetext{%
  \vskip-\bigskipamount\@setaddresses}
  }
\def\enddoc@text{}
\shorttitle}
\@nx\MakeUppercase{\the\toks@}}
\patchcmd\@settitle{\uppercasenonmath\@title}{\Large}{}{}
\authors}
\newtheorem{theorem}{Theorem}[section]
\newtheorem{definition}{Definition}[section]
\newtheorem{corollary}{Corollary}[section]
\newtheorem{proposition}{Proposition}[section]
\newtheorem{lemma}{Lemma}[section]
\newtheorem{remark}{Remark}[section]
\newtheorem{example}{Example}[section]
\numberwithin{equation}{section}
\begin{document}
\address{$^{[1]}$ University of Sfax, Tunisia.}
\email{\url{kais.feki@hotmail.com}}
\subjclass[2010]{Primary 46C05, 47A12; Secondary 47B65, 47B15, 47B20}

\keywords{Positive operator, semi-inner product, spectral radius, numerical radius, normaloid operator, spectraloid operator.}

\date{September 24, 2019}
\author[Kais Feki] {\Large{Kais Feki}$^{1}$}
\title[Spectral radius of semi-Hilbertian space operators and its applications]{Spectral radius of semi-Hilbertian space operators and its applications}

\begin{abstract}
In this paper, we aim to introduce the notion of the spectral radius of bounded linear operators acting on a complex Hilbert space $\mathcal{H}$, which are bounded with respect to the seminorm induced by a positive operator $A$ on $\mathcal{H}$. Mainly, we show that $r_A(T)\leq \omega_A(T)$ for every $A$-bounded operator $T$, where $r_A(T)$ and $\omega_A(T)$ denote respectively the $A$-spectral radius and the $A$-numerical radius of $T$. This allows to establish that $r_A(T)=\omega_A(T)=\|T\|_A$ for every $A$-normaloid operator $T$, where $\|T\|_A$ is denoted to be the $A$-operator seminorm of $T$. Moreover, some characterizations of $A$-normaloid and $A$-spectraloid operators are given.
\end{abstract}
\maketitle

\section{Introduction and Preliminaries}\label{s1}
Let $\mathcal{H}$ be a non trivial complex Hilbert space with inner product $\langle\cdot\mid\cdot\rangle$ and associated norm $\|\cdot\|$. Let $\mathcal{B}(\mathcal{H})$ denote the algebra of bounded linear operators on $\mathcal{H}$.

In all that follows, by an operator we mean a bounded linear operator. The range of every operator is denoted by $\mathcal{R}(T)$, its null space by $\mathcal{N}(T)$ and $T^*$ is the adjoint of $T$. For the sequel, it is useful to point out the following facts. Let  $\mathcal{B}(\mathcal{H})^+$ be the cone of positive (semi-definite) operators, i.e.
$\mathcal{B}(\mathcal{H})^+=\left\{A\in \mathcal{B}(\mathcal{H})\,;\,\langle Ax\mid x\rangle\geq 0,\;\forall\;x\in \mathcal{H}\;\right\}$. Any $A\in \mathcal{B}(\mathcal{H})^+$ defines a positive semi-definite sesquilinear form as follows:
$$\langle\cdot\mid\cdot\rangle_{A}:\mathcal{H}\times \mathcal{H}\longrightarrow\mathbb{C},\;(x,y)\longmapsto\langle x\mid y\rangle_{A} :=\langle Ax\mid y\rangle.$$
Notice that the induced seminorm is given by $\|x\|_A=\langle x\mid x\rangle_A^{1/2}$, for every $x\in \mathcal{H}$. This makes $\mathcal{H}$ into a semi-Hilbertian space. One can check that $\|\cdot\|_A$ is a norm on $\mathcal{H}$ if and only if $A$ is injective, and that $(\mathcal{H},\|\cdot\|_A)$ is complete if and only if $\mathcal{R}(A)$ is closed. Further, $\langle\cdot\mid\cdot\rangle_{A}$ induces a seminorm on a certain subspace of $\mathcal{B}(\mathcal{H})$ as follows. Given $T\in\mathcal{B}(\mathcal{H})$, if there exists $c>0$ satisfying $\|Tx \|_{A} \leq c \|x \|_{A}$, for all $x\in
\overline{\mathcal{R}(A)}$ it holds:
\begin{equation*}\label{semii}
\|T\|_A:=\sup_{\substack{x\in \overline{\mathcal{R}(A)},\\ x\not=0}}\frac{\|Tx\|_A}{\|x\|_A}=\displaystyle\sup_{\substack{x\in \overline{\mathcal{R}(A)},\\ \|x\|_A= 1}}\|Tx\|_{A}<\infty.
\end{equation*}

From now on, we suppose that $A\neq0$ and we denote $\mathcal{B}^{A}(\mathcal{H}):=\left\{T\in \mathcal{B}(\mathcal{H})\,;\,\|T\|_{A}< \infty\right\}$. It can be seen that $\mathcal{B}^{A}(\mathcal{H})$ is not a subalgebra of $\mathcal{B}(\mathcal{H})$. Moreover, if $T\in\mathcal{B}^{A}(\mathcal{H})$, then one can check that $\|T\|_A=0$ if and only if $ATA=0$.

\begin{definition} (\cite{acg1})
Let $A\in\mathcal{B}(\mathcal{H})^+$ and $T \in \mathcal{B}(\mathcal{H})$. An operator $S\in\mathcal{B}(\mathcal{H})$ is called an $A$-adjoint of $T$ if for every $x,y\in \mathcal{H}$, the identity $\langle Tx\mid y\rangle_A=\langle x\mid Sy\rangle_A$ holds. That is $S$ is solution in $\mathcal{B}(\mathcal{H})$ of the equation $AX=T^*A$.
\end{definition}
The existence of an A-adjoint operator is not guaranteed. The set of all operators which admit $A$-adjoints is denoted by $\mathcal{B}_{A}(\mathcal{H})$. By Douglas Theorem \cite{doug}, we  have
\begin{align*}
\mathcal{B}_{A}(\mathcal{H})
& = \left\{T\in \mathcal{B}(\mathcal{H})\,;\;\mathcal{R}(T^{*}A)\subseteq \mathcal{R}(A)\right\}\\
 &=\left\{T \in \mathcal{B}(\mathcal{H})\,;\;\exists \,\lambda > 0\,;\;\|ATx\| \leq \lambda \|Ax\|,\;\forall\,x\in \mathcal{H}  \right\},
\end{align*}
If $T\in \mathcal{B}_A(\mathcal{H})$, the reduced solution of the equation
$AX=T^*A$ is a distinguished $A$-adjoint operator of $T$, which is denoted by $T^\sharp$. Note that, $T^\sharp=A^\dag T^*A$ in which $A^\dag$ is the Moore-Penrose inverse of $A$. For more results concerning $T^\sharp$ see \cite{acg1,acg2}.

Again, by applying Douglas theorem, it can observed that
$$\mathcal{B}_{A^{1/2}}(\mathcal{H})=\left\{T \in \mathcal{B}(\mathcal{H})\,;\;\exists \,\lambda > 0\,;\;\|Tx\|_{A} \leq \lambda \|x\|_{A},\;\forall\,x\in \mathcal{H}  \right\}.$$
Operators in $\mathcal{B}_{A^{1/2}}(\mathcal{H})$ are called $A$-bounded. Notice that, if $T\in \mathcal{B}_{A^{1/2}}(\mathcal{H})$, then $T(\mathcal{N}(A))\subseteq \mathcal{N}(A)$. Moreover, it was proved in \cite{fg} that if $T\in\mathcal{B}_{A^{1/2}}(\mathcal{H})$, then
\begin{align}\label{aseminorm}
\|T\|_A
&=\sup\left\{\|Tx\|_{A}\,;\;x\in \mathcal{H},\,\|x\|_{A}= 1\right\}\\
&=\sup\left\{|\langle Tx\mid y\rangle_A|\,;\;x,y\in \mathcal{H},\,\|x\|_{A}=\|y\|_{A}= 1\right\}\nonumber.
\end{align}
For the sequel, it is important to point out the fact that for every $T,S\in \mathcal{B}_{A^{1/2}}(\mathcal{H})$ we have
\begin{equation}\label{crucial0}
\|TS\|_A\leq \|T\|_A\cdot\|S\|_A.
\end{equation}
Also, if $T\in \mathcal{B}_{A^{1/2}}(\mathcal{H})$, then $\|T\|_A=0$ if and only if $AT=0$. For more details concerning $A$-bounded operators, see \cite{acg3} and the references therein.

Note that $\mathcal{B}_{A}(\mathcal{H})$ and $\mathcal{B}_{A^{1/2}}(\mathcal{H})$ are two subalgebras of $\mathcal{B}(\mathcal{H})$ which are neither closed nor dense in $\mathcal{B}(\mathcal{H})$. Moreover, the following inclusions $\mathcal{B}_{A}(\mathcal{H})\subseteq\mathcal{B}_{A^{1/2}}(\mathcal{H})\subseteq
\mathcal{B}^{A}(\mathcal{H})\subseteq \mathcal{B}(\mathcal{H})$ hold with equality if $A$ is injective and has closed range. For the sake of completeness, we will give examples that show that the above inclusions are in general strict. For an account of results, we refer to \cite{acg1,acg2,acg2} and the references therein.

It is useful to recall that an operator $T$ is called $A$-self-adjoint if $AT$ is self-adjoint (that is $AT=T^*A$) and it is called $A$-positive if $AT\geq0$ and we write $T\geq_{A}0$. Observe that if $T$ is A-selfadjoint, then $T\in\mathcal{B}_A(\mathcal{H})$. However it does not mean, in general, that $T=T^\sharp$. Furthermore, an operator $T\in\mathcal{B}_A(\mathcal{H})$ satisfies $T=T^{\sharp}$ if and only if $T$ is $A$-selfadjoint and $\mathcal{R}(T) \subset \overline{\mathcal{R}(A)}$ (see \cite[Section 2]{acg1}).

Recently, there are many papers that study operators defined on semi-Hilbertian spaces. One may see \cite{bakfeki01,bakfeki04,zamani1,majsecesuci} and their references.

Notice also that in \cite{saddi}, Saddi generalized the concepts of numerical radius and spectral radius of an operator and defined the $A$-numerical radius and $A$-spectral radius of an operator $T \in \mathcal{B}(\mathcal{H})$ by
\begin{equation*}\label{numsaddi}
\omega_A(T) = \sup\left\{|\langle Tx\mid x\rangle_A|\,;\;x\in \mathcal{H},\;\|x\|_A= 1\right\}
 \end{equation*}
and
\begin{equation}\label{formulasaddi}
 r_A(T) =\displaystyle\limsup_{n\to \infty}\left\|T^n\right\|_A^{\frac{1}{n}},
 \end{equation}
 respectively. As we will see later, if $T \in \mathcal{B}(\mathcal{H})$ (even if $T \in \mathcal{B}^A(\mathcal{H})$), the the above definition of $r_A$ does not guarantee that $r_A(T)<\infty$. So, one main target of this paper is to provide an alternative definition of $r_A$ that coincides with the formula \eqref{formulasaddi} when $T\in\mathcal{B}_{A^{1/2}}(\mathcal{H})$. Further, the new definition generalizes the well known Gelfand's formula \cite[Proposition 6.21.]{kub002}.

The study of the spectral radius and the joint spectral radius of operators has been the subject of intense research during last decades. For more details, the interested reader is referred to \cite{bakfeki02,bakfeki04} and the references therein. Recently, the concept of of $A$-joint spectral radius associated with a $d$-tuple of operators $\mathbf{T}=(T_1,\cdots,T_d) \in \mathcal{B}_A(\mathcal{H})^d$ (not necessarily to be commuting) was introduced by H.Baklouti et al. in \cite{bakfeki04} as follows.
\begin{definition}(\cite{bakfeki04})
Let $\mathbf{T}=(T_1,\cdots,T_d) \in \mathcal{B}_A(\mathcal{H})^d$ be a $d$-tuple of operators. The $A$-joint spectral radius of $\mathbf{T}$ is given by
$$r_A(\mathbf{T})= \inf_{n\in\mathbb{N}^*}\left(\bigg\|\sum_{g\in \mathbf{G}(n,d)} \mathbf{T}_g^\sharp \mathbf{T}_{g}\bigg\|_A^{\frac{1}{2n}} \right),$$
where $\mathbb{N}^*$ is the set of positive natural numbers. Also, $\mathbf{G}(n,d)$ denotes the set of all functions from $\{1,\cdots,n\}$ into  $\{1,\cdots,d\}$ and $\mathbf{T}_g:=\prod_{k=1}^nT_{g(k)}$, for $g\in \mathbf{G}(n,d)$.
\end{definition}
Moreover, the following theorem was proved in \cite{bakfeki04}.
\begin{theorem}
Let $\mathbf{T}=(T_1,\cdots,T_d) \in \mathcal{B}_A(\mathcal{H})^d$ be a $d$-tuple of operators. Then,
\begin{equation}\label{jointArad}
r_A(\mathbf{T})=\lim_{n\to+\infty}\left(\bigg\|\sum_{g\in \mathbf{G}(n,d)} \mathbf{T}_g^\sharp \mathbf{T}_{g}\bigg\|_A^{\frac{1}{2n}} \right).
\end{equation}
\end{theorem}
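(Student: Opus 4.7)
The plan is to apply Fekete's subadditive lemma to the sequence $a_n := \bigl\|\sum_{g\in\mathbf{G}(n,d)}\mathbf{T}_g^\sharp\mathbf{T}_g\bigr\|_A$. Each $a_n$ is finite because $\mathcal{B}_A(\mathcal{H})$ is a subalgebra on which $\|\cdot\|_A$ is submultiplicative by \eqref{crucial0}. Once the estimate $a_{n+m}\le a_n a_m$ is in hand, Fekete's lemma gives $\lim_{n\to\infty}a_n^{1/n}=\inf_{n\ge 1}a_n^{1/n}$, and since $x\mapsto x^{1/2}$ is continuous and monotone on $[0,\infty)$, this is equivalent to $\lim_{n\to\infty}a_n^{1/(2n)}=\inf_{n\ge 1}a_n^{1/(2n)}$, which is exactly \eqref{jointArad}.

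The heart of the argument is the submultiplicativity. Writing $S_n:=\sum_{g\in\mathbf{G}(n,d)}\mathbf{T}_g^\sharp\mathbf{T}_g$, every $g\in\mathbf{G}(n+m,d)$ decomposes uniquely as a concatenation $(g_1,g_2)$ with $g_1\in\mathbf{G}(n,d)$ and $g_2\in\mathbf{G}(m,d)$, and the multiplicative behavior $\mathbf{T}_g=\mathbf{T}_{g_1}\mathbf{T}_{g_2}$ together with the anti-multiplicativity $(\mathbf{T}_{g_1}\mathbf{T}_{g_2})^\sharp=\mathbf{T}_{g_2}^\sharp\mathbf{T}_{g_1}^\sharp$ yields
\[
S_{n+m}\;=\;\sum_{g_2\in\mathbf{G}(m,d)}\mathbf{T}_{g_2}^\sharp\,S_n\,\mathbf{T}_{g_2}.
\]
Each $S_n$ is $A$-positive, since for every $y\in\mathcal{H}$ one has $\langle S_n y\mid y\rangle_A=\sum_{g_1}\|\mathbf{T}_{g_1}y\|_A^2\ge 0$. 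By Cauchy--Schwarz for the semi-inner product, $\langle S_n y\mid y\rangle_A\le \|S_n y\|_A\|y\|_A\le \|S_n\|_A\|y\|_A^2$. Substituting $y=\mathbf{T}_{g_2}x$ and summing over $g_2$ gives, for any $x\in\mathcal{H}$ with $\|x\|_A=1$,
\[
0\;\le\;\langle S_{n+m}x\mid x\rangle_A\;\le\;\|S_n\|_A\sum_{g_2}\|\mathbf{T}_{g_2}x\|_A^2\;=\;\|S_n\|_A\,\langle S_m x\mid x\rangle_A\;\le\;\|S_n\|_A\,\|S_m\|_A.
\]

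The main obstacle is the final step: passing from this bound on $\langle S_{n+m}x\mid x\rangle_A$ to the same bound on $\|S_{n+m}\|_A$. I would settle this by invoking the identity $\|P\|_A=\sup_{\|x\|_A=1}\langle Px\mid x\rangle_A$ valid for every $A$-positive $P\in\mathcal{B}_A(\mathcal{H})$, which is the semi-Hilbertian counterpart of the classical fact $\|P\|=\omega(P)$ for positive $P$. This can be derived either directly from the $A$-version of the $C^*$-identity $\|T\|_A^2=\|T^\sharp T\|_A$, or, perhaps more cleanly, by passing to the genuine Hilbert space $(\overline{\mathcal{R}(A^{1/2})},\langle\cdot\mid\cdot\rangle_A)$ on which $A$-positivity becomes ordinary positivity and $A$-bounded operators become honest bounded operators. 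With this identification, $\|S_{n+m}\|_A\le\|S_n\|_A\|S_m\|_A$, and Fekete's lemma closes the argument.
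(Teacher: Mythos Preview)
The paper does not supply its own proof of this theorem: it is quoted verbatim from \cite{bakfeki04}, so there is no in-paper argument to compare against. What the present paper does prove is the single-operator special case (Theorem~\ref{thm_spectral}), and there the method is precisely Fekete's lemma applied to $n\mapsto\log\|T^n\|_A$. Your proposal is the natural $d$-variable extension of that same idea, and it is correct.

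Your key identity
\[
S_{n+m}=\sum_{g_2\in\mathbf{G}(m,d)}\mathbf{T}_{g_2}^{\sharp}\,S_n\,\mathbf{T}_{g_2}
\]
is right: each $g\in\mathbf{G}(n+m,d)$ factors uniquely as a concatenation, $\mathbf{T}_g=\mathbf{T}_{g_1}\mathbf{T}_{g_2}$, and $(ST)^{\sharp}=T^{\sharp}S^{\sharp}$ for $S,T\in\mathcal{B}_A(\mathcal{H})$ (see \cite{acg1}). The chain of inequalities leading to $\langle S_{n+m}x\mid x\rangle_A\le\|S_n\|_A\|S_m\|_A$ is also fine. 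The only point you flag as needing care, namely $\|P\|_A=\sup_{\|x\|_A=1}\langle Px\mid x\rangle_A$ for $A$-positive $P\in\mathcal{B}_A(\mathcal{H})$, is indeed available: $AP\ge 0$ forces $AP=P^*A$, so $P$ is $A$-self-adjoint, and then $\omega_A(P)=\|P\|_A$ is exactly \cite[Lemma~2.1]{zamani1} (the present paper recovers this later as Corollary~\ref{aself3}). Your alternative route via the Hilbert space $\mathbf{R}(A^{1/2})$ is equally valid and is in fact the machinery the paper develops in Section~\ref{s2}. Either way, submultiplicativity $a_{n+m}\le a_na_m$ follows, and Fekete's lemma closes the proof as you describe.
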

Observe from \eqref{jointArad} that if $d=1$, then the $A$-spectral radius of an operators $T\in \mathcal{B}_A(\mathcal{H})$ is given by
\begin{equation}\label{9dim}
r_A(T)=\inf_{n\in \mathbb{N}^*}\|T^n\|_A^{\frac{1}{n}}=\displaystyle\lim_{n\to\infty}\|T^n\|_A^{\frac{1}{n}}.
\end{equation}
As we will see later, \eqref{9dim} remains also true for every $T\in \mathcal{B}_{A^{1/2}}(\mathcal{H})$.

It is known that for $T\in\mathcal{B}(\mathcal{H})$, one has
$$r(T)\leq \omega(T)\leq \|T\|,$$
 where $r(\cdot)$, $\omega(\cdot)$ and $\|\cdot\|$ are the classical spectral, numerical radii and operator norm of $T$, respectively (see \cite{goldtad01}). Further, the above inequalities become equalities if $T$ is normaloid (i.e. satisfies $r(T)=\|T\|$). For more details see \cite{furuta0}.

 A fundamental inequality for the numerical radius of Hilbert space operators is the power inequality, which is proved by Berger in \cite{berger} and says that for $T\in \mathcal{B}(\mathcal{H})$, we have
\begin{equation}\label{spetralclnumerical01}
\omega(T^n)\leq \omega(T)^n,\;\forall\,n\in \mathbb{N}^*.
\end{equation}
 In this article, we will extend \eqref{spetralclnumerical01} to the case of $A$-bounded operators and we will establish several results governing $r_A(\cdot)$, $\omega_A(\cdot)$ and $\|\cdot\|_A$. These results will be a natural extensions of the well known case $A=I.$

Notice that the concepts of normal, hyponormal and self-adjoint operators in the context of operators defined on a semi-Hilbertian space are developed. In this paper, we will also introduce the concept of paranormal operators in semi-Hilbertian spaces and we will analyze the relationship between these operators.
 \section{Main Results}\label{s2}
In this section, we present our main results. Before that, let us emphasize the fact that if $T \in \mathcal{B}(\mathcal{H})$ (even if $T \in \mathcal{B}^A(\mathcal{H})$), Saddi's definition of $r_A$ given in \eqref{formulasaddi} does not guarantee that $r_A(T)<\infty$. For the reader's convenience, we state here an example.
\begin{example}\label{ex01}
Let $\mathcal{H}=\ell_{\mathbb{N}^*}^2(\mathbb{C})$ and consider the operator $A= {\text {diag}}(0, 1, 0, \frac{1}{2!}, 0, \frac{1}{3!}, \cdots) \in \mathcal{B} (\mathcal{H})^+$. Let $T\in \mathcal{B}(\ell_{\mathbb{N}^*}^2(\mathbb{C}))$ be such that $Te_n=e_{n+1}$, where $(e_n)_{n\in \mathbb{N}^*}$ denotes the canonical basis of $\ell_{\mathbb{N}^*}^2(\mathbb{C})$. A short calculation reveals that $ATA= 0$. This implies that $ATx=0$ for all $x\in \overline{\mathcal{R}(A)}$. So, we infer that $T\in \mathcal B^A(\mathcal{H})$. Moreover, $\|T\|_A=0$. On the other hand, it can be checked that
$$\|T^2Ae_{2n}\|_A^2=\frac{1}{(n!)^2(n-1)!}\;\text{ and }\;\|Ae_{2n}\|_A^2= \frac{1}{(n!)^3},\;\,\forall\, n\geq 2.$$
Thus,
$$\|T^2Ae_{2n}\|_A=\sqrt{n}\|Ae_{2n}\|_A,\;\,\forall\, n\geq 2.$$
This yields that $T^2 \notin \mathcal B^A(\mathcal{H})$ and then $\|T^2\|_A=+\infty$. Therefore, $r_A(T)=+\infty$.
\end{example}

\begin{remark}
In view of Example \ref{ex01}, we see that $\mathcal{B}^{A}(\mathcal{H})$ is not a subalgebra of $\mathcal{B}(\mathcal{H})$. Also, we observe that the inclusions $\mathcal{B}_{A^{1/2}}(\mathcal{H})\subseteq \mathcal{B}^A(\mathcal{H})\subseteq\mathcal{B}(\mathcal{H})$ are in general strict. Indeed $T\in \mathcal{B}^A(\mathcal{H})$ but one can verify that $T\notin \mathcal{B}_{A^{1/2}}(\mathcal{H})$. Also $T^2\in \mathcal{B}(\mathcal{H})$ but $T^2\notin \mathcal{B}^A(\mathcal{H})$. Furthermore the inclusion $\mathcal{B}_A(\mathcal{H})\subseteq \mathcal{B}_{A^{1/2}}(\mathcal{H})$ is also in general strict. In fact, let $A$ be the diagonal operator on $\ell_{\mathbb{Z}}^2(\mathbb{C})$ given by $Ae_n= A_n e_n$ and $T$ be the operator given by $T e_n=\sqrt{T_n}e_{-n}$ with
$$A_n=\begin{cases}
\frac{1}{n^2}&\text{if}\;\; n\geq 1,\\
0&\text{if}\;\;n=0,\\
\frac{1}{|n|}&\text{if}\;\; n\leq -1.
\end{cases}
\;\text{ and }\;
T_n= \begin{cases}
\frac{1}{n}&\text{if}\;\;n\geq 1,\\
0&\text{else}.
\end{cases}$$
It can be observed that $A$ has not closed range. Moreover, we see that
$$\|Tx\|_A^2=\sum_{n>0} \frac{1}{n^2}\,|x_n|^2\leq\sum_{n>0}\frac{1}{n^2}\,|x_n|^2+\sum_{n<0}\frac{1}{|n|}\,|x_n|^2=\|x\|_A^2,$$
for all $x = \sum_{n \in \mathbb{Z}}x_n e_n \in \ell_{\mathbb{Z}}^2(\mathbb{C})$. Thus, $T\in \mathcal{B}_{A^{1/2}}(\mathcal{H})$. However, we have $\|AT e_n\|^2=\frac{1}{n^3}$ and $\|Ae_n\|^2=\frac{1}{n^4}$ for all $n>0$. This shows that $T\notin\mathcal{B}_{A}(\mathcal{H})$.
\end{remark}

Now, we introduce the following new definition of the spectral radius $r_A$. Henceforth, $A$ is implicitly understood as a positive operator.

\begin{definition}
Let $T\in \mathcal{B}_{A^{1/2}}(\mathcal{H})$. The $A$-spectral radius of $T$ is defined as
$$r_A(T)=\displaystyle\inf_{n\in \mathbb{N}^*}\|T^n\|_A^{\frac{1}{n}}.$$
\end{definition}

In the next theorem, we show the equivalence between our new definition and Saddi's definition \cite{saddi} for $T\in \mathcal{B}_{A^{1/2}}(\mathcal{H})$.
\begin{theorem}\label{thm_spectral}
If $T\in \mathcal{B}_{A^{1/2}}(\mathcal{H})$, then
$$r_A(T)=\displaystyle\lim_{n\to\infty}\|T^n\|_A^{\frac{1}{n}}.$$
\end{theorem}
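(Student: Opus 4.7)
The plan is to reduce the statement to Fekete's subadditivity lemma applied to the sequence $a_n = \log \|T^n\|_A$. The central ingredient is submultiplicativity of $n\mapsto \|T^n\|_A$, which will follow directly from inequality \eqref{crucial0} once I know that all powers $T^n$ lie in $\mathcal{B}_{A^{1/2}}(\mathcal{H})$; this is guaranteed because the excerpt records that $\mathcal{B}_{A^{1/2}}(\mathcal{H})$ is a subalgebra of $\mathcal{B}(\mathcal{H})$. So for every $m,n\in\mathbb{N}^*$ one has $T^m,T^n\in\mathcal{B}_{A^{1/2}}(\mathcal{H})$ and hence $\|T^{m+n}\|_A \le \|T^m\|_A\cdot\|T^n\|_A$.

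Next, I would deal with a degenerate case: if $\|T^{n_0}\|_A=0$ for some $n_0$, then submultiplicativity forces $\|T^n\|_A=0$ for every $n\ge n_0$, so both $\inf_n \|T^n\|_A^{1/n}$ and $\lim_n\|T^n\|_A^{1/n}$ are zero and the equality is trivial. Assume henceforth $\|T^n\|_A>0$ for every $n$, so $a_n:=\log \|T^n\|_A$ is a well-defined real subadditive sequence.

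Now I carry out the standard Fekete argument directly on $\|T^n\|_A^{1/n}$. Set $L:=\inf_{n\in\mathbb{N}^*}\|T^n\|_A^{1/n}$; clearly $L\le\liminf_{n\to\infty}\|T^n\|_A^{1/n}$. For the reverse direction, fix $\varepsilon>0$ and choose $m$ with $\|T^m\|_A^{1/m}<L+\varepsilon$. Write any $n>m$ as $n=qm+r$ with $0\le r<m$, with the convention $T^0=I$ (so $\|T^0\|_A=1$). Submultiplicativity gives
\begin{equation*}
\|T^n\|_A \;\le\; \|T^m\|_A^{q}\cdot \|T^r\|_A,
\end{equation*}
and therefore
\begin{equation*}
\|T^n\|_A^{1/n} \;\le\; \|T^m\|_A^{q/n}\cdot \|T^r\|_A^{1/n}.
\end{equation*}
As $n\to\infty$ with $m$ fixed, $q/n\to 1/m$ while $r$ stays bounded in $\{0,1,\dots,m-1\}$, so the factor $\|T^r\|_A^{1/n}$ tends to $1$ (the finitely many values $\|T^r\|_A$ are bounded). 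This yields $\limsup_{n\to\infty}\|T^n\|_A^{1/n}\le \|T^m\|_A^{1/m}<L+\varepsilon$, and letting $\varepsilon\to 0$ gives $\limsup_{n\to\infty}\|T^n\|_A^{1/n}\le L$. Combined with the opposite inequality above, the limit exists and equals $L$, which is exactly the claim.

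No step looks genuinely hard; the only mild subtlety is making sure that $\|T^n\|_A$ is actually finite and submultiplicative, which is why I invoke the subalgebra property of $\mathcal{B}_{A^{1/2}}(\mathcal{H})$ together with \eqref{crucial0}. The rest is a verbatim transcription of the Gelfand/Fekete argument to the seminormed setting.
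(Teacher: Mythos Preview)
Your proposal is correct and follows essentially the same route as the paper: both reduce the statement to Fekete's subadditivity lemma via the submultiplicativity inequality $\|T^{m+n}\|_A\le\|T^m\|_A\|T^n\|_A$, which comes from \eqref{crucial0} and the fact that $\mathcal{B}_{A^{1/2}}(\mathcal{H})$ is a subalgebra. The only differences are cosmetic: you spell out the Fekete argument rather than citing it, and you explicitly dispose of the degenerate case $\|T^{n_0}\|_A=0$ (which the paper's proof glosses over when it writes $f(n)=\log\|T^n\|_A$).
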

\begin{proof}
Let $f(n)=\log\|T^n\|_A$. Then, by using \eqref{crucial0} we see that
\begin{align*}
f(m+n)
&=\log\|T^{m+n}\|_A\\
&=\log\|T^m\cdot T^n\|_A\\
&\leq\log\|T^m\|_A+\log\|T^n\|_A\\
&=f(m)+f(n).
\end{align*}
Hence $f$ is  is subadditive. Consequently, by Fekete's lemma \cite{fekete}, $\displaystyle\lim_{n\to\infty}\frac{f(n)}n$ exists and is equal to $\displaystyle\inf_{n\ge1}\frac{f(n)}n$. This completes the proof since the log function is continuous and increasing.
\end{proof}

\begin{remark}
 In Theorem \ref{thm_spectral}, we assumed that $T\in \mathcal{B}_{A^{1/2}}(\mathcal{H})$. In fact this assumption is crucial so that we  have $\|T^m\cdot T^n\|_A\leq \|T^m\|_A\|T^n\|_A$. Notice that this inequality was a key step in proving subadditivity of the function $f$ in Theorem \ref{thm_spectral}. Of course, we observe here that if $T\in \mathcal{B}_{A^{1/2}}(\mathcal{H})$, then so are $T^n,\;n=2,3,\cdots.$
\end{remark}

In the next proposition we collect some properties of the $A$-spectral radius.
\begin{proposition}\label{spectradiprop}
Let $T,S\in \mathcal{B}_{A^{1/2}}(\mathcal{H})$. Then the following assertions hold:
\begin{itemize}
  \item [(1)] If $TS=ST$, then $r_A(TS)\leq r_A(T)r_A(S)$.
  \item [(2)] If $TS=ST$, then $r_A(T+S)\leq r_A(T)+r_A(S)$.
 \item [(3)] $r_A(T^k)=[r_A(T)]^k$ for all $k\in \mathbb{N}^*$.
\end{itemize}
\end{proposition}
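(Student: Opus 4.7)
The plan is to leverage Theorem \ref{thm_spectral}, which turns the $\inf$ into a $\lim$, together with the submultiplicativity \eqref{crucial0} of $\|\cdot\|_A$ on $\mathcal{B}_{A^{1/2}}(\mathcal{H})$. Throughout, I would first note that $\mathcal{B}_{A^{1/2}}(\mathcal{H})$ is a subalgebra, so all the iterates, sums, and products appearing below remain $A$-bounded and \eqref{crucial0} applies freely.

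For assertion (1), since $T$ and $S$ commute I would write $(TS)^n = T^n S^n$ for every $n\in\mathbb{N}^*$, and apply \eqref{crucial0} to get $\|(TS)^n\|_A \leq \|T^n\|_A\,\|S^n\|_A$. Taking $n$-th roots and passing to the limit using Theorem \ref{thm_spectral} yields $r_A(TS)\le r_A(T)r_A(S)$.

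For assertion (2), the strategy is to expand $(T+S)^n$ via the binomial theorem (valid since $TS=ST$) and control each term. Fix $\varepsilon>0$. Because $\|T^k\|_A^{1/k}\to r_A(T)$, the sequence $\bigl(\|T^k\|_A/(r_A(T)+\varepsilon)^k\bigr)_k$ is bounded by some constant $M_T$, and similarly for $S$ with constant $M_S$. Submultiplicativity gives
\begin{align*}
\|(T+S)^n\|_A &\leq \sum_{k=0}^n \binom{n}{k}\|T^k\|_A\,\|S^{n-k}\|_A\\
&\leq M_T M_S\sum_{k=0}^n \binom{n}{k}(r_A(T)+\varepsilon)^k(r_A(S)+\varepsilon)^{n-k}\\
&= M_T M_S\,\bigl(r_A(T)+r_A(S)+2\varepsilon\bigr)^n.
\end{align*}
Taking $n$-th roots, passing to the limit, and then letting $\varepsilon\to 0$ delivers $r_A(T+S)\leq r_A(T)+r_A(S)$.

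For assertion (3), I would compute directly using Theorem \ref{thm_spectral}:
\begin{equation*}
r_A(T^k)=\lim_{n\to\infty}\|(T^k)^n\|_A^{1/n}=\lim_{n\to\infty}\bigl(\|T^{kn}\|_A^{1/(kn)}\bigr)^{k}=\bigl[r_A(T)\bigr]^k,
\end{equation*}
since the subsequence $(\|T^{kn}\|_A^{1/(kn)})_n$ inherits the limit $r_A(T)$ from the full sequence.

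The only mildly delicate point is item (2), where one must remember that $r_A(T+S)$ and all the $\|T^k\|_A$, $\|S^{n-k}\|_A$ are finite — this is precisely why the hypothesis $T,S\in \mathcal{B}_{A^{1/2}}(\mathcal{H})$ (and the fact that this set is a subalgebra) is used. The rest is a routine adaptation of the classical Hilbert space argument, with $\|\cdot\|_A$ replacing $\|\cdot\|$.
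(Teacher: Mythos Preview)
Your proof is correct and follows essentially the same approach as the paper: the same use of Theorem \ref{thm_spectral} and \eqref{crucial0} for (1) and (3), and the same binomial-expansion-plus-$\varepsilon$ strategy for (2). Your bookkeeping in (2) is slightly cleaner --- you absorb the finitely many ``bad'' indices into a single constant $M_TM_S$, whereas the paper splits the sum into three ranges --- but the underlying idea is identical.
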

\begin{proof}
\noindent (1)\;Notice first that since $TS=ST$, then $(TS)^n=T^nS^n$ for all $n$. So, by using Theorem \ref{thm_spectral} together with \eqref{crucial0}, it follows that
\begin{align*}
r_A(TS)
&=\displaystyle\lim_{n\to\infty}\|(TS)^n\|_A^{\frac{1}{n}}\\
&=\displaystyle\lim_{n\to\infty}\|T^nS^n\|_A^{\frac{1}{n}}\\
& \leq \displaystyle\lim_{n\to\infty}\|T^n\|_A^{\frac{1}{n}}\cdot\|S^n\|_A^{\frac{1}{n}}\\
& =r_A(T)\cdot r_A(S).
\end{align*}
\par \vskip 0.1 cm \noindent (2)\;By using the fact that $TS=ST$ and \eqref{crucial0}, we observe that
\begin{equation}\label{binomfor01}
\|(T + S)^n\|_A=\left\|\sum_{k = 0}^n \binom{n}{k} S^k T^{n - k}\right\|\leq \sum_{k = 0}^n \binom{n}{k} \|S\|_A^k \|T\|_A^{n - k}.
\end{equation}
Let $p$ and $q$ be such that $p> r_A(S)$ and $q> r_A(T)$. Then, there exists an integer $m>0$ such that
$$\|S^n\|_A^{\frac{1}{n}}<p\;\;\text{ and }\;\;\|T^n\|_A^{\frac{1}{n}}<q,\;\;\forall\,n\geq m,$$
On the other hand, by using \eqref{crucial0}, we see that
$$\|S^n\|_A^{\frac{1}{n}}\leq\|S\|_A:=s\;\;\text{ and }\;\|T^n\|_A^{\frac{1}{n}}\leq\|T\|_A:=t,\;\;\forall\,n\in \mathbb{N}^*.$$
It follows from \eqref{binomfor01} that, for $n> 2m$,
\begin{align*}
\|(T + S)^n\|_A
& \leq \sum_{k = 0}^{m-1} \binom{n}{k} s^k q^{n - k}+ \sum_{k = m}^{n-m} \binom{n}{k} p^k q^{n - k}+ \sum_{k =n-m+1}^n \binom{n}{k} p^k t^{n - k}\\
 &=\sum_{k = 0}^{m-1} \binom{n}{k} \left(\tfrac{s}{p}\right)^kp^k q^{n - k}+ \sum_{k = m}^{n-m} \tbinom{n}{k} p^k q^{n - k}+ \sum_{k =n-m+1}^n \binom{n}{k} q^{n - k} p^k \left(\tfrac{t}{q}\right)^{n - k}\\
 &\leq\left[\sum_{k = 0}^n \binom{n}{k}p^k q^{n - k}\right] \underbrace{\left[\max_{0\leq i\leq m-1}\left(\tfrac{s}{p}\right)^i+1+ \max_{0\leq i\leq m-1}\left(\tfrac{t}{q}\right)^i\right]}_{=M}=(p+q)^nM,
 \end{align*}
Since $M$ does not depend on $n$, then we get
$$\displaystyle\lim_{n\to\infty} \|(T+S)^n\|_A^{1/n}\leq \displaystyle\lim_{n\to\infty}(p+q)M^{1/n}=p+q.$$
By letting $p$ and $q$ tend respectively to $r_A(S)$ and $r_A(T)$, we obtain the desired inequality.
\par \vskip 0.1 cm \noindent (3)\;For all $k\in \mathbb{N}^*$, we have
$$r_A(T^k)=\displaystyle\lim_{n\to\infty}\|T^{nk}\|_A^{\frac{1}{n}}=\displaystyle\lim_{n\to\infty}\left[\|T^{nk}\|_A^{\frac{1}{nk}}\right]^k=[r_A(T)]^k.$$
\end{proof}

Now, we turn your attention to the study of the relationship between the $A$-spectral radius and the $A$-numerical radius of $A$-bounded operators. Before that, let us emphasize the fact that $\omega_A(T)$ can be equal to $+\infty$ for an arbitrary $T \in \mathcal{B}(\mathcal{H})$ (even if $T \in \mathcal{B}^A(\mathcal{H})$). Indeed, one can take the operators
$A= \begin{pmatrix}1 & 0 \\ 0 & 0 \end{pmatrix}$ and $T= \begin{pmatrix} 0 & 1 \\ 1 & 0 \end{pmatrix}$ on $\mathbb{C}^2$. It is not difficult to see that $T\in\mathcal{B}^{A}(\mathcal{H})$. Also, a straightforward calculation shows that $\omega_A(T)=+\infty$. More precisely we have the following result.
\begin{theorem}
Let $T \in \mathcal{B}(\mathcal{H})$ be such that $T(\mathcal{N}(A))\nsubseteq\mathcal{N}(A)$. Then, $\omega_A(T)=+\infty$.
\end{theorem}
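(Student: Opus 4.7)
The plan is to construct, for the given $T$, a one-parameter family of $A$-unit vectors $x_\alpha$ along which $|\langle Tx_\alpha \mid x_\alpha\rangle_A|$ blows up. By hypothesis there exists $x_0 \in \mathcal{N}(A)$ with $Tx_0 \notin \mathcal{N}(A)$; equivalently $Ax_0 = 0$ while $ATx_0 \neq 0$. I would take $x_\alpha := \alpha x_0 + y$ for a carefully chosen fixed $y$ and a varying scalar $\alpha \in \mathbb{C}$, exploiting the fact that adding multiples of $x_0$ costs nothing in $A$-seminorm but dramatically changes the quadratic form $\langle T\cdot\mid\cdot\rangle_A$.

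The first real step is to produce $y \in \mathcal{H}$ satisfying both $\langle Ay \mid y\rangle \neq 0$ and $\langle ATx_0 \mid y\rangle \neq 0$. The two exclusion sets $\mathcal{N}(A)$ and $(ATx_0)^{\perp}$ are proper subspaces of $\mathcal{H}$ because $A \neq 0$ and $ATx_0 \neq 0$, and a vector space is never the union of two proper subspaces; hence some $y$ avoids both, and after rescaling I may assume $\|y\|_A = 1$. Because $Ax_0 = 0$ annihilates every cross term under $A$, one checks directly that $\|x_\alpha\|_A^2 = \|y\|_A^2 = 1$ for every $\alpha \in \mathbb{C}$.

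Next I expand
\begin{align*}
\langle Tx_\alpha \mid x_\alpha\rangle_A &= |\alpha|^2\langle ATx_0,x_0\rangle + \alpha\langle ATx_0,y\rangle + \bar\alpha\langle ATy,x_0\rangle + \langle ATy,y\rangle.
\end{align*}
The quadratic term $\langle ATx_0,x_0\rangle = \langle Tx_0,Ax_0\rangle$ vanishes by self-adjointness of $A$ together with $Ax_0 = 0$, so the whole expression is affine in the real and imaginary parts of $\alpha$. Setting $c_1 := \langle ATx_0,y\rangle \neq 0$ and $c_2 := \langle ATy,x_0\rangle$, I choose $\arg\alpha$ such that $\alpha c_1 + \bar\alpha c_2 \neq 0$ (this is possible since $c_1 \neq 0$: the map $\theta \mapsto e^{i\theta}c_1 + e^{-i\theta}c_2$ cannot vanish identically unless $c_1 = c_2 = 0$), and then let $|\alpha| \to \infty$. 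The dominant linear-in-$\alpha$ contribution forces $|\langle Tx_\alpha \mid x_\alpha\rangle_A| \to \infty$ while $\|x_\alpha\|_A = 1$, which yields $\omega_A(T) = +\infty$.

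The only potentially delicate point is the simultaneous realization of the two nonvanishing conditions on $y$ — a priori one might fear $\mathcal{N}(A) \supseteq (ATx_0)^{\perp}$ — but the elementary union-of-subspaces fact disposes of this cleanly. Everything else is a two-line expansion and a choice of phase, so I expect the argument to be short once that existence point is made explicit.
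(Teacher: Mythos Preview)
Your argument is correct and shares the paper's core idea: pick $x_0\in\mathcal{N}(A)$ with $ATx_0\neq 0$, choose a second vector $y$ with $\|y\|_A=1$, and exploit that $\|\alpha x_0+y\|_A=1$ while $\langle T(\alpha x_0+y)\mid \alpha x_0+y\rangle_A$ is linear in $\alpha$.

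Two small remarks on execution. First, your coefficient $c_2=\langle ATy,x_0\rangle=\langle Ty,Ax_0\rangle=0$ automatically, so the phase discussion is unnecessary: the form reduces to $\alpha c_1+c_3$ with $c_1\neq 0$. Second, the paper avoids your union-of-subspaces step by making the explicit choice $y=b:=ATx_0/\|ATx_0\|_A$ (after checking $\|ATx_0\|_A\neq 0$ via $A^{1/2}ATx_0\neq 0$); with this $b$ one has $\langle Tx_0\mid b\rangle_A=\|ATx_0\|^2/\|ATx_0\|_A\neq 0$ directly, and as $\mu$ ranges over $\mathbb{C}$ the values $\mu\,\|ATx_0\|^2/\|ATx_0\|_A+\langle Tb\mid b\rangle_A$ sweep out all of $\mathbb{C}$, giving the slightly stronger conclusion $W_A(T)=\mathbb{C}$. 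Your route trades this explicit construction for an elementary existence argument and reaches the stated conclusion $\omega_A(T)=+\infty$ just as quickly.
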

\begin{proof}
Observe that, $\omega_A(T)=\sup\{|z|\,;\;z\in  W_A(T)\}$ where
$$W_A(T)=\{\langle T x\mid x\rangle_A\,;\;x \in \mathcal{H}\;\;\text{and}\;\,\|x\|_A=1\}.$$
 Since $\mathcal{H}=\mathcal{N}(A)\oplus \overline{\mathcal{R}(A)}$, then $x\in\mathcal{H}$ can be written in a unique way into $x = x_1 + x_2$ with $x_1\in\mathcal{N}(A)$ and $x_2\in \overline{\mathcal{R}(A)}$. Since $A\geq 0$, it follows that $\mathcal{N}(A)=\mathcal{N}(A^{1/2})$ which implies that $\|x\|_A=\|x_2\|_A$. Thus, we get
\begin{equation}\label{wat1}
W_A(T)=\{\langle Tx_1\mid x_2\rangle_A+\langle Tx_2\mid x_2\rangle_A\,;\;x_1 \in \mathcal{N}(A),\;x_2 \in \overline{\mathcal{R}(A)}\;\;\text{with}\;\|x_2\|_A=1\}.
\end{equation}
Since $T(\mathcal{N}(A))\nsubseteq\mathcal{N}(A)$, then there exists $a \in \mathcal{N}(A)$ such that $ATa \neq 0$. This implies that $A^{1/2}ATa\neq 0$. Indeed, assume that $A^{1/2}ATa =0$. Then, $A^2Ta=A^{1/2}(A^{1/2}ATa)=0$. So, $\|ATa\|^2=\langle A^2Ta\mid Ta\rangle=0$ and thus $ATa=0$, which is a contradiction. Set $b = \frac{ATa}{\|ATa\|_A} \in \mathcal{R}(A)$. Clearly $\|b\|_A = 1$. Further, if $\mu\in\mathbb{C}$ then $(\mu a, b) \in \mathcal{N}(A)\times \overline{\mathcal{R}(A)}$ with $\|b\|_A = 1$. Hence, by using \eqref{wat1}, we get
\begin{align*}
W_A(T)
&\supseteq\{\langle T(\mu a)\mid b\rangle_A + \langle Tb\mid b\rangle_A\,;\;\mu\in\mathbb{C}\,\}\\
&=\left\{\mu\frac{\|ATa\|^2}{\|ATa\|_A} + \langle Tb\mid b\rangle_A\,;\;\mu\in\mathbb{C}\,\right\}\\
&=\mathbb{C}.
\end{align*}
 Therefore, $W_A(T)= \mathbb{C}$ and thus $\omega_A(T)=+\infty$.
\end{proof}

\begin{remark}
Note that the fact that $W_A(T)= \mathbb{C}$ in the case $T(\mathcal{N}(A))\not\subset\mathcal{N}(A)$ has recently been proved by H. Baklouti et al. in \cite{bakfeki01}. However, our approach here is different from theirs.
\end{remark}
Notice that $\omega_A(T)<+\infty$ for every $T \in \mathcal{B}_{A^{1/2}}(\mathcal{H})$. More precisely, we have the following result.

\begin{proposition}\label{anradius}(\cite{bakfeki01})
If $T\in \mathcal{B}_{A^{1/2}}(\mathcal{H})$, then
\begin{equation}\label{refine1}
\frac{1}{2} \|T\|_A\leq\omega_A(T) \leq \|T\|_A.
\end{equation}
\end{proposition}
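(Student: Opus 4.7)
The proposition is the semi-Hilbertian analogue of the classical two-sided estimate $\frac{1}{2}\|T\|\le\omega(T)\le\|T\|$, so the plan is to mimic the standard arguments but with the $A$-semi-inner product playing the role of the inner product, taking care at the places where the semidefiniteness could cause issues.

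For the upper bound, I would apply the Cauchy--Schwarz inequality for the positive semidefinite form $\langle\cdot\mid\cdot\rangle_A$. For any $x\in\mathcal{H}$ with $\|x\|_A=1$,
\begin{equation*}
|\langle Tx\mid x\rangle_A|\le\|Tx\|_A\|x\|_A\le\|T\|_A,
\end{equation*}
where the last step uses the very definition of $\|T\|_A$ (valid since $T\in\mathcal{B}_{A^{1/2}}(\mathcal{H})$). Taking the supremum over all such $x$ yields $\omega_A(T)\le\|T\|_A$.

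For the lower bound, my plan is to exploit the polarization identity, which remains valid for any sesquilinear form. Writing
\begin{equation*}
\langle Tx\mid y\rangle_A=\frac{1}{4}\sum_{k=0}^{3}i^{k}\bigl\langle T(x+i^{k}y)\,\big|\,x+i^{k}y\bigr\rangle_A,
\end{equation*}
I would first prove the pointwise bound $|\langle Tz\mid z\rangle_A|\le\omega_A(T)\|z\|_A^{2}$ for \emph{every} $z\in\mathcal{H}$. For $\|z\|_A>0$ this is immediate by normalization; for $\|z\|_A=0$ one applies the Cauchy--Schwarz inequality for the positive form $\langle\cdot\mid\cdot\rangle_A$ to the scalar $\langle Tz\mid z\rangle_A$ and concludes it vanishes, so the bound still holds. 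Combining this with the parallelogram identity (which is a formal consequence of sesquilinearity, hence also valid for $\|\cdot\|_A$)
\begin{equation*}
\sum_{k=0}^{3}\|x+i^{k}y\|_A^{2}=4\bigl(\|x\|_A^{2}+\|y\|_A^{2}\bigr),
\end{equation*}
I obtain $|\langle Tx\mid y\rangle_A|\le\omega_A(T)(\|x\|_A^{2}+\|y\|_A^{2})$. Specializing to $\|x\|_A=\|y\|_A=1$ gives $|\langle Tx\mid y\rangle_A|\le 2\omega_A(T)$, and taking the supremum over such $x,y$ via the formula \eqref{aseminorm} yields $\|T\|_A\le 2\omega_A(T)$.

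The only delicate point is the pointwise bound when $\|z\|_A=0$, i.e.\ when $z\in\mathcal{N}(A^{1/2})=\mathcal{N}(A)$; the rest is bookkeeping. The use of formula \eqref{aseminorm} in the final step is essential, since it is what allows the supremum to be interpreted as a bona fide operator seminorm rather than merely a restricted sesquilinear sup. I do not expect any genuine obstacle beyond this verification.
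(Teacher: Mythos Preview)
Your argument is correct and is precisely the standard polarization proof, adapted with the one necessary care point (handling $\|z\|_A=0$ via Cauchy--Schwarz for the semidefinite form, which indeed gives $|\langle Tz\mid z\rangle_A|\le\|Tz\|_A\|z\|_A=0$). There is nothing to fix.

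Note, however, that the paper does not actually supply its own proof of this proposition: it is quoted verbatim from \cite{bakfeki01} and stated without argument. So there is no ``paper's proof'' to compare against; your write-up simply fills in what the paper takes as known. The approach you outline is the expected one and is consistent with how the paper uses the result (in particular your reliance on formula~\eqref{aseminorm}, which requires $T\in\mathcal{B}_{A^{1/2}}(\mathcal{H})$, is exactly why the hypothesis is stated as it is).
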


The semi-inner product $\langle\cdot\mid\cdot\rangle_A$ induces an inner product on the quotient space $\mathcal{H}/\mathcal{N}(A)$ defined as
$$[\overline{x},\overline{y}] = \langle Ax\mid y\rangle,$$
for all $\overline{x},\overline{y}\in \mathcal{H}/\mathcal{N}(A)$. Notice that $(\mathcal{H}/\mathcal{N}(A),[\cdot,\cdot])$ is not complete unless $\mathcal{R}(A)$ is not closed. However, a canonical construction due to L. de Branges and J. Rovnyak in \cite{branrov} shows that the completion of $\mathcal{H}/\mathcal{N}(A)$  under the inner product $[\cdot,\cdot]$ is isometrically isomorphic to the Hilbert space $\mathcal{R}(A^{1/2})$
with the inner product
$$(A^{1/2}x,A^{1/2}y):=\langle Px\mid Py\rangle,\;\forall\, x,y \in \mathcal{H},$$ where $P$ denotes the orthogonal projection of $\mathcal{H}$ onto the closure of $\mathcal{R}(A)$.

In the sequel, the Hilbert space $\left(\mathcal{R}(A^{1/2}), (\cdot,\cdot)\right)$ will be denoted by $\mathbf{R}(A^{1/2})$ and we use the symbol $\|\cdot\|_{\mathbf{R}(A^{1/2})}$ to represent the norm induced by the inner product $(\cdot,\cdot)$. The fact that $\mathcal{R}(A)\subset \mathcal{R}(A^{1/2}) $ implies that
\begin{align*}
(Ax,Ay)
&=(A^{1/2}A^{1/2}x,A^{1/2}A^{1/2}y) = \langle PA^{1/2}x\mid PA^{1/2}y\rangle\\
 &=\langle A^{1/2}x\mid A^{1/2}y\rangle=\langle x \mid y\rangle_A.
\end{align*}
This leads to the following useful relation:
\begin{equation}\label{usefuleq01}
\|Ax\|_{\mathbf{R}(A^{1/2})}=\|x\|_A,\;\forall\,x\in \mathcal{H}.
\end{equation}
 The interested reader is referred to \cite{acg3} for more information related to the Hilbert space $\mathbf{R}(A^{1/2})$.

 As in \cite{acg3}, we consider the operator $W_A: \mathcal{H}\to \mathbf{R}(A^{1/2})$ defined by
\begin{equation}
W_Ax=Ax,\;\forall\,x\in \mathcal{H}.
\end{equation}
Now, we adapt from \cite{acg3} the following proposition in our context.
\begin{proposition}\label{prop_arias}
Let $T\in \mathcal{B}(\mathcal{H})$. Then $T\in \mathcal{B}_{A^{1/2}}(\mathcal{H})$ if and only if there exists a unique $\widetilde{T}\in \mathcal{B}(\mathbf{R}(A^{1/2}))$ such that $W_AT =\widetilde{T}W_A$.
\end{proposition}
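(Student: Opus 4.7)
The overall strategy is to exploit the fundamental isometric identity $\|Ax\|_{\mathbf{R}(A^{1/2})}=\|x\|_A$ from \eqref{usefuleq01}, which says that the map $W_A$ essentially identifies $\mathcal{H}/\mathcal{N}(A)$ (equipped with $\|\cdot\|_A$) with a dense subspace of $\mathbf{R}(A^{1/2})$. Once this is recognized, the proposition reduces to the general fact that a linear map which is bounded with respect to some seminorm descends to a bounded operator on the associated completion.

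For the implication $(\Leftarrow)$, which is the quick direction, I would simply compute, for every $x\in\mathcal{H}$,
$$\|Tx\|_A=\|ATx\|_{\mathbf{R}(A^{1/2})}=\|W_ATx\|_{\mathbf{R}(A^{1/2})}=\|\widetilde{T}W_Ax\|_{\mathbf{R}(A^{1/2})}\leq\|\widetilde{T}\|\,\|x\|_A,$$
using \eqref{usefuleq01} at the first and last equality. This directly yields $T\in\mathcal{B}_{A^{1/2}}(\mathcal{H})$ with $\|T\|_A\leq\|\widetilde{T}\|$.

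For the harder implication $(\Rightarrow)$, assume $T\in\mathcal{B}_{A^{1/2}}(\mathcal{H})$ and construct $\widetilde{T}$ in three steps. First, define $\widetilde{T}_0$ on the range $W_A(\mathcal{H})=\mathcal{R}(A)\subset\mathbf{R}(A^{1/2})$ by the prescribed intertwining relation $\widetilde{T}_0(Ax):=ATx$. Well-definedness is the first delicate point: if $Ax=Ay$ then $x-y\in\mathcal{N}(A)=\mathcal{N}(A^{1/2})$, hence $\|T(x-y)\|_A\leq\|T\|_A\|x-y\|_A=0$, so $T(x-y)\in\mathcal{N}(A)$ and therefore $ATx=ATy$; this is precisely the fact $T(\mathcal{N}(A))\subseteq\mathcal{N}(A)$ recalled in the preliminaries. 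Second, show $\widetilde{T}_0$ is bounded for the $\mathbf{R}(A^{1/2})$-norm: by \eqref{usefuleq01}
$$\|\widetilde{T}_0(Ax)\|_{\mathbf{R}(A^{1/2})}=\|ATx\|_{\mathbf{R}(A^{1/2})}=\|Tx\|_A\leq\|T\|_A\,\|x\|_A=\|T\|_A\,\|Ax\|_{\mathbf{R}(A^{1/2})}.$$
Third, extend $\widetilde{T}_0$ by uniform continuity to the $\|\cdot\|_{\mathbf{R}(A^{1/2})}$-closure of $\mathcal{R}(A)$, which I claim is all of $\mathbf{R}(A^{1/2})$. Indeed, given $A^{1/2}y\in\mathcal{R}(A^{1/2})$ we may assume $y\in\overline{\mathcal{R}(A)}$ (replacing $y$ by $Py$ does not change $A^{1/2}y$), then choose $y_n\in\mathcal{H}$ with $A^{1/2}y_n\to y$ in $\mathcal{H}$ (using density of $\mathcal{R}(A^{1/2})$ in $\overline{\mathcal{R}(A)}$), and conclude $\|A^{1/2}y-Ay_n\|_{\mathbf{R}(A^{1/2})}=\|P(y-A^{1/2}y_n)\|\to 0$. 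The resulting $\widetilde{T}\in\mathcal{B}(\mathbf{R}(A^{1/2}))$ satisfies $\widetilde{T}W_A=W_AT$ by construction, and uniqueness is automatic: any other bounded intertwiner would agree with $\widetilde{T}$ on the dense set $W_A(\mathcal{H})$.

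The main obstacle I anticipate is the verification that $W_A(\mathcal{H})=\mathcal{R}(A)$ is $\|\cdot\|_{\mathbf{R}(A^{1/2})}$-dense in $\mathbf{R}(A^{1/2})$, since this requires unwinding the de Branges--Rovnyak identification. Everything else is either the direct use of \eqref{usefuleq01} or the standard bounded-operator extension-by-density argument.
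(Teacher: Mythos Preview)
Your proof is correct. Note, however, that the paper does not actually supply its own proof of this proposition: it is quoted (with the phrase ``we adapt from \cite{acg3}'') as a known result from Arias--Corach--Gonzalez, so there is no in-paper argument to compare against directly.

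That said, the density step you flag as the main obstacle --- that $\mathcal{R}(A)$ is $\|\cdot\|_{\mathbf{R}(A^{1/2})}$-dense in $\mathbf{R}(A^{1/2})$ --- is precisely the content of Lemma~\ref{density01}, which the paper \emph{does} prove. Your approximation argument (take $A^{1/2}y$, assume $y\in\overline{\mathcal{R}(A)}$, approximate $y$ by $A^{1/2}y_n$ in $\mathcal{H}$, and use $\|A^{1/2}u\|_{\mathbf{R}(A^{1/2})}=\|Pu\|$) is essentially equivalent to the paper's: there the same fact is packaged as the observation that $U_A\colon(\overline{\mathcal{R}(A)},\|\cdot\|)\to\mathbf{R}(A^{1/2})$, $x\mapsto A^{1/2}x$, is a unitary isomorphism carrying the dense subset $\mathcal{R}(A^{1/2})$ onto $\mathcal{R}(A)$. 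The unitary-isomorphism formulation is slightly cleaner conceptually, while your hands-on version has the virtue of being self-contained within the proof of the proposition.
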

\begin{remark}
In the following diagram we summarize the relationship between operators in $\mathcal{B}_{A^{1/2}}(\mathcal{H})$ and $\mathcal{B}(\mathbf{R}(A^{1/2}))$.
\[\begin{tikzcd}[sep=huge]
\mathcal{H} \arrow[d,"W_A" swap] \arrow[r,"T"]& \mathcal{H} \arrow[d,"W_A"] \\
\mathbf{R}(A^{1/2}) \arrow[r,"\widetilde{T}"] & \mathbf{R}(A^{1/2})
\end{tikzcd}\]
Notice that by \cite[Proposition 2.2.]{acg3}, we have the following result: Given $\widetilde{T}\in \mathcal{B}(\mathbf{R}(A^{1/2}))$, then there exists $T\in\mathcal{B}(\mathcal{H})$ such that $W_AT =\widetilde{T}W_A$ if and only if $\widetilde{T}\mathcal{R}(A)\subset \mathcal{R}(A)$. In such case, there exists a unique $T\in\mathcal{B}_{A^{1/2}}(\mathcal{H})$ such that $\mathcal{R}(T)\subset \overline{\mathcal{R}(A)}$.
\end{remark}
To prove our main result, we first state two lemmas.
\begin{lemma}\label{density01}
Let $A\in \mathcal{B}(\mathcal{H})^+$. Then, $\mathcal{R}(A)$ is dense in $\mathbf{R}(A^{1/2})$.
\end{lemma}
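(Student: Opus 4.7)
The plan is to take an arbitrary $z \in \mathbf{R}(A^{1/2})$, written as $z = A^{1/2}y$ for some $y \in \mathcal{H}$, and construct a sequence $(x_n)\subseteq \mathcal{H}$ such that $Ax_n \to z$ in the $\|\cdot\|_{\mathbf{R}(A^{1/2})}$-norm. The entire argument pivots on unpacking this norm through the defining inner product $(A^{1/2}u, A^{1/2}v) = \langle Pu \mid Pv\rangle$ introduced before the statement.

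First I would write $Ax - A^{1/2}y = A^{1/2}(A^{1/2}x - y)$ and apply the definition of the norm to obtain
\[
\|Ax - A^{1/2}y\|_{\mathbf{R}(A^{1/2})}^{2} = \|P(A^{1/2}x - y)\|^{2}.
\]
Because $A$ is positive one has $\mathcal{N}(A^{1/2}) = \mathcal{N}(A)$, and consequently $\overline{\mathcal{R}(A^{1/2})} = \overline{\mathcal{R}(A)}$. In particular $A^{1/2}x \in \overline{\mathcal{R}(A)}$, so $P A^{1/2}x = A^{1/2}x$ and the previous expression collapses to $\|A^{1/2}x - Py\|^{2}$.

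It will then remain to approximate $Py$ in the ordinary $\mathcal{H}$-norm by elements of $\mathcal{R}(A^{1/2})$. This is immediate: since $Py$ lies in $\overline{\mathcal{R}(A)} = \overline{\mathcal{R}(A^{1/2})}$, one may select $(x_n)\subseteq \mathcal{H}$ with $A^{1/2}x_n \to Py$, and then the displayed identity gives $\|Ax_n - z\|_{\mathbf{R}(A^{1/2})} = \|A^{1/2}x_n - Py\| \to 0$, which is exactly what density requires.

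There is no genuine obstacle beyond careful bookkeeping about which norm is being used at each step. The main technical input exploited twice is the identity $\overline{\mathcal{R}(A^{1/2})} = \overline{\mathcal{R}(A)}$: once to replace $P A^{1/2}x$ by $A^{1/2}x$ inside the $\mathbf{R}(A^{1/2})$-norm computation, and once to guarantee that the approximating sequence in $\mathcal{H}$ actually exists. As an alternative, one could instead verify the dual statement that the orthogonal complement of $\mathcal{R}(A)$ inside $\mathbf{R}(A^{1/2})$ is trivial, using the injectivity of $A^{1/2}$ on $\overline{\mathcal{R}(A)}$, but the direct approximation route above seems the shortest.
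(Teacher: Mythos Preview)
Your proof is correct and rests on the same idea as the paper's: both exploit that $x\mapsto A^{1/2}x$ is an isometry from $(\overline{\mathcal{R}(A)},\|\cdot\|)$ onto $\mathbf{R}(A^{1/2})$, so that density of $\mathcal{R}(A^{1/2})$ in $\overline{\mathcal{R}(A)}$ transfers to density of $\mathcal{R}(A)=A^{1/2}(\mathcal{R}(A^{1/2}))$ in $\mathbf{R}(A^{1/2})$. The only difference is presentational: the paper names this map $U_A$, checks it is a unitary isomorphism, and invokes the abstract fact that unitaries preserve density, whereas you unpack the norm identity $\|A^{1/2}u\|_{\mathbf{R}(A^{1/2})}=\|Pu\|$ by hand and build the approximating sequence explicitly.
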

\begin{proof}
Remark first that, since $\mathcal{N}(A)=\mathcal{N}(A^{1/2})$, then $\overline{\mathcal{R}(A)}=\overline{\mathcal{R}(A^{1/2})}$. Moreover, it can be observed that the map $\overline{\mathcal{R}(A)}\rightarrow \mathcal{R}(A^{1/2}),\;x\mapsto A^{1/2}x$ is bijective. So, one can verify that the following map
\begin{align*}
U_A\colon (\overline{\mathcal{R}(A)},\|\cdot\|) & \rightarrow \mathbf{R}(A^{1/2}):=\left(\mathcal{R}(A^{1/2}), (\cdot,\cdot)\right)\\
x&\mapsto A^{1/2}x,
\end{align*}
is an unitary isomorphism. So, since $\mathcal{R}(A^{1/2})$ is dense in $(\overline{\mathcal{R}(A)},\|\cdot\|)$, then $U_A(\mathcal{R}(A^{1/2}))=\mathcal{R}(A)$ is dense in $\mathbf{R}(A^{1/2})$.
\end{proof}

\begin{lemma}\label{norminraundemi}
If $T\in \mathcal{B}(\mathbf{R}(A^{1/2}))$, then
\begin{equation}\label{norminaa1demi}
\|T\|_{\mathcal{B}(\mathbf{R}(A^{1/2}))}=\sup\left\{\|TAx\|_{\mathbf{R}(A^{1/2})}\,;\;x\in \overline{\mathcal{R}(A)},\,\|Ax\|_{\mathbf{R}(A^{1/2})}=1\right\}.
\end{equation}
\end{lemma}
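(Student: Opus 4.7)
Let me denote the right-hand side of \eqref{norminaa1demi} by $M$. The plan is to establish the two inequalities $M \leq \|T\|_{\mathcal{B}(\mathbf{R}(A^{1/2}))}$ and $\|T\|_{\mathcal{B}(\mathbf{R}(A^{1/2}))} \leq M$ separately. The first is trivial: the supremum in $M$ is taken over a subset of the unit sphere of $\mathbf{R}(A^{1/2})$ (namely, those unit vectors which happen to lie in $\mathcal{R}(A)$), so restricting the sup can only make it smaller.

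For the reverse inequality $\|T\|_{\mathcal{B}(\mathbf{R}(A^{1/2}))} \leq M$, the key tool is Lemma \ref{density01}, which tells us that $\mathcal{R}(A)$ is dense in $\mathbf{R}(A^{1/2})$. Given any $y \in \mathbf{R}(A^{1/2})$ with $\|y\|_{\mathbf{R}(A^{1/2})}=1$, I would pick a sequence $Ax_n \in \mathcal{R}(A)$ converging to $y$ in $\mathbf{R}(A^{1/2})$. Using the orthogonal decomposition $\mathcal{H} = \mathcal{N}(A) \oplus \overline{\mathcal{R}(A)}$, write $x_n = u_n + v_n$ with $u_n \in \mathcal{N}(A)$ and $v_n \in \overline{\mathcal{R}(A)}$; then $Ax_n = Av_n$ so we may replace $x_n$ by $v_n \in \overline{\mathcal{R}(A)}$. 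Since $Av_n \to y \neq 0$, for $n$ sufficiently large $\alpha_n := \|Av_n\|_{\mathbf{R}(A^{1/2})} > 0$; set $w_n := v_n/\alpha_n \in \overline{\mathcal{R}(A)}$, so that $\|Aw_n\|_{\mathbf{R}(A^{1/2})} = 1$ and $Aw_n \to y$.

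By definition of $M$, each vector $Aw_n$ satisfies $\|TAw_n\|_{\mathbf{R}(A^{1/2})} \leq M$. Passing to the limit using continuity of $T$ on $\mathbf{R}(A^{1/2})$ gives $\|Ty\|_{\mathbf{R}(A^{1/2})} \leq M$, and then taking the supremum over unit $y$ yields $\|T\|_{\mathcal{B}(\mathbf{R}(A^{1/2}))} \leq M$.

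There is no serious obstacle here; the only subtle step is the replacement of $x_n$ by its component $v_n$ in $\overline{\mathcal{R}(A)}$, which must be justified so that the supremum in the statement is genuinely over the restricted set $\{x \in \overline{\mathcal{R}(A)}\}$ rather than all of $\mathcal{H}$. This is why the decomposition $\mathcal{H} = \mathcal{N}(A) \oplus \overline{\mathcal{R}(A)}$ (which uses positivity of $A$) is invoked. Everything else is a direct density-and-continuity argument.
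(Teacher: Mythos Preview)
Your argument is correct and follows essentially the same route as the paper: both directions are obtained exactly as you describe, with the nontrivial inequality coming from Lemma~\ref{density01} (density of $\mathcal{R}(A)$ in $\mathbf{R}(A^{1/2})$) together with the decomposition $\mathcal{H}=\mathcal{N}(A)\oplus\overline{\mathcal{R}(A)}$ to force the approximants into $\overline{\mathcal{R}(A)}$. The only cosmetic difference is that the paper phrases the approximation as showing each value $\|TA^{1/2}x\|_{\mathbf{R}(A^{1/2})}$ lies in the closure of the set $\Omega$ and then uses $\sup\overline{\Omega}=\sup\Omega$, whereas you normalize the approximating sequence explicitly and pass to the limit via continuity of $T$; your version makes the normalization step (needed because $\|Ax_n\|_{\mathbf{R}(A^{1/2})}$ is only close to~$1$, not equal to~$1$) more transparent.
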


\begin{proof}
By using the fact that $\mathcal{H}=\mathcal{N}(A^{1/2})\oplus \overline{\mathcal{R}(A^{1/2})}$ and the definition of the operator norm, we obtain
\begin{align*}
\|T\|_{\mathcal{B}(\mathbf{R}(A^{1/2}))}
&=\sup\left\{\|Ty\|_{\mathbf{R}(A^{1/2})}\,;\;y\in \mathcal{R}(A^{1/2}),\,\|y\|_{\mathbf{R}(A^{1/2})}=1\right\}\\
&=\sup\left\{\|TA^{1/2}x\|_{\mathbf{R}(A^{1/2})}\,;\;x\in \mathcal{H},\,\|A^{1/2}x\|_{\mathbf{R}(A^{1/2})}=1\right\}\\
&=\sup\left\{\|TA^{1/2}x\|_{\mathbf{R}(A^{1/2})}\,;\;x\in \overline{\mathcal{R}(A^{1/2})},\,\|A^{1/2}x\|_{\mathbf{R}(A^{1/2})}=1\right\}.
\end{align*}
Now, consider the following set
$$\Omega=\left\{\|TAx\|_{\mathbf{R}(A^{1/2})}\,;\;x\in \overline{\mathcal{R}(A)},\,\|Ax\|_{\mathbf{R}(A^{1/2})}=1\right\}.$$
We shall prove that $\|T\|_{\mathcal{B}(\mathbf{R}(A^{1/2}))}=\sup \Omega$. Clearly, we have $\sup \Omega\leq \|T\|_{\mathcal{B}(\mathbf{R}(A^{1/2}))}$. Moreover, let
$$y\in \left\{\|TA^{1/2}x\|_{\mathbf{R}(A^{1/2})}\,;\;x\in \overline{\mathcal{R}(A^{1/2})},\,\|A^{1/2}x\|_{\mathbf{R}(A^{1/2})}=1\right\},$$
then there exists $x\in \overline{\mathcal{R}(A^{1/2})}$ such that $\|A^{1/2}x\|_{\mathbf{R}(A^{1/2})}=1$ and $y=\|TA^{1/2}x\|_{\mathbf{R}(A^{1/2})}$. From Lemma \ref{density01}, there exists a sequence $(x_n)_n\subset \mathcal{H}$ (which can chosen to be in $\overline{\mathcal{R}(A^{1/2})}$ because of the decomposition $\mathcal{H}=\mathcal{N}(A^{1/2})\oplus \overline{\mathcal{R}(A^{1/2})}$) such that $\displaystyle\lim_{n\to +\infty}\|Ax_n-A^{1/2}x\|_{\mathbf{R}(A^{1/2})}=0$. Hence, $y\in \overline{\Omega}$ which yields that $\sup \overline{\Omega}\geq \|T\|_{\mathcal{B}(\mathbf{R}(A^{1/2}))}$. Finally, since $\sup \overline{\Omega}=\sup \Omega$, then
$$\sup \Omega\geq \|T\|_{\mathcal{B}(\mathbf{R}(A^{1/2}))}.$$
 This shows the desired equality \eqref{norminaa1demi}.
\end{proof}

Now we are in a position to prove one of our main results in this paper.
\begin{theorem}\label{thm_spec_numer}
If $T\in \mathcal{B}_{A^{1/2}}(\mathcal{H})$, then
\begin{equation*}
r_A(T)\leq \omega_A(T).
\end{equation*}
\end{theorem}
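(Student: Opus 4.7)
The strategy I would follow is to transfer the whole problem to the genuine Hilbert space $\mathbf{R}(A^{1/2})$ via the operator $\widetilde{T}$ supplied by Proposition \ref{prop_arias}, and then invoke the classical Hilbert-space inequality $r(\widetilde{T}) \leq \omega(\widetilde{T})$.

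The first step is to identify the $A$-seminorm of $T$ with the usual operator norm of $\widetilde{T}$. Using the intertwining $W_A T = \widetilde{T} W_A$ together with the isometric identity $\|Ax\|_{\mathbf{R}(A^{1/2})} = \|x\|_A$ from \eqref{usefuleq01}, one has $\|\widetilde{T}(Ax)\|_{\mathbf{R}(A^{1/2})} = \|ATx\|_{\mathbf{R}(A^{1/2})} = \|Tx\|_A$ for every $x \in \mathcal{H}$. Combining this with the formula for $\|\widetilde{T}\|_{\mathcal{B}(\mathbf{R}(A^{1/2}))}$ provided by Lemma \ref{norminraundemi} yields $\|T\|_A = \|\widetilde{T}\|_{\mathcal{B}(\mathbf{R}(A^{1/2}))}$. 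By uniqueness in Proposition \ref{prop_arias}, $\widetilde{T^n} = \widetilde{T}^n$ for all $n \geq 1$, so the same identity applied to $T^n$ gives $\|T^n\|_A = \|\widetilde{T}^n\|_{\mathcal{B}(\mathbf{R}(A^{1/2}))}$. By Theorem \ref{thm_spectral} and Gelfand's formula in $\mathcal{B}(\mathbf{R}(A^{1/2}))$, this furnishes
\begin{equation*}
r_A(T) = \lim_{n\to \infty} \|T^n\|_A^{1/n} = \lim_{n\to \infty} \|\widetilde{T}^n\|_{\mathcal{B}(\mathbf{R}(A^{1/2}))}^{1/n} = r(\widetilde{T}).
\end{equation*}

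The second step is to match the two numerical radii. From the definition of the inner product on $\mathbf{R}(A^{1/2})$ one has $(Au,Av) = \langle u \mid v\rangle_A$, so for every $x \in \mathcal{H}$,
\begin{equation*}
\langle Tx \mid x\rangle_A = (ATx, Ax) = (\widetilde{T}(W_A x), W_A x).
\end{equation*}
Setting $y = W_A x = Ax$ and using $\|y\|_{\mathbf{R}(A^{1/2})} = \|x\|_A$, one gets
\begin{equation*}
\omega_A(T) = \sup\bigl\{ |(\widetilde{T} y, y)| \,;\, y \in \mathcal{R}(A),\ \|y\|_{\mathbf{R}(A^{1/2})} = 1 \bigr\}.
\end{equation*}
Since $\mathcal{R}(A)$ is dense in $\mathbf{R}(A^{1/2})$ by Lemma \ref{density01} and the map $y \mapsto (\widetilde{T} y, y)$ is continuous on $\mathbf{R}(A^{1/2})$, the supremum is unchanged if $y$ ranges over the full unit sphere of $\mathbf{R}(A^{1/2})$, giving $\omega_A(T) = \omega(\widetilde{T})$.

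The final step is to combine the two identifications with the classical Hilbert-space inequality $r(\widetilde{T}) \leq \omega(\widetilde{T})$ applied to $\widetilde{T} \in \mathcal{B}(\mathbf{R}(A^{1/2}))$, concluding $r_A(T) \leq \omega_A(T)$. I do not anticipate a genuine obstacle: the only delicate point is the density argument used to pass from $y \in \mathcal{R}(A)$ to $y \in \mathbf{R}(A^{1/2})$ in the formula for $\omega_A(T)$, which is handled cleanly by Lemma \ref{density01}. The rest is a matter of carefully unpacking the definitions and invoking the intertwining relation $W_A T = \widetilde{T} W_A$ at each stage.
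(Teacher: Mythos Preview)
Your proposal is correct and follows essentially the same route as the paper: transfer the problem to the Hilbert space $\mathbf{R}(A^{1/2})$ via Proposition~\ref{prop_arias}, identify $r_A(T)=r(\widetilde{T})$ using $\widetilde{T^n}=\widetilde{T}^{\,n}$ and Lemma~\ref{norminraundemi}, identify $\omega_A(T)=\omega(\widetilde{T})$ via the density Lemma~\ref{density01}, and then apply the classical inequality $r(\widetilde{T})\le\omega(\widetilde{T})$. The only cosmetic difference is that the paper carries out the density step for $\omega_A(T)=\omega(\widetilde{T})$ by an explicit sequence approximation rather than your appeal to continuity of $y\mapsto(\widetilde{T}y,y)$, but the content is the same.
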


\begin{proof}
Observe that since $T\in\mathcal{B}_{A^{1/2}}(\mathcal{H})$, then  $T^n\in\mathcal{B}_{A^{1/2}}(\mathcal{H})$ for all $n\in \mathbb{N}^*$. This implies that $T^n(\mathcal{N}(A))\subset \mathcal{N}(A)$ for all $n\in \mathbb{N}^*$. Thus, by using the decomposition $\mathcal{H}=\mathcal{N}(A)\oplus \overline{\mathcal{R}(A)}$ and \eqref{aseminorm}, we see that
$$\|T^n\|_A = \sup\left\{\|T^nx\|_A\,;\;x\in \overline{\mathcal{R}(A)},\,\|x\|_A=1\right\}.$$
Moreover, since $T^n\in\mathcal{B}_{A^{1/2}}(\mathcal{H})$ for all $n\in \mathbb{N}^*$, then by Proposition \ref{prop_arias}, there exists $\widetilde{T^n}\in \mathcal{B}(\mathbf{R}(A^{1/2}))$ such that $W_AT^n =\widetilde{T^n}W_A$. So, by using the equality \eqref{usefuleq01} and Lemma \ref{norminraundemi}, we infer that
\begin{align}\label{compar01}
\|T^n\|_A
& =\sup\left\{\|T^nx\|_A\,;\;x\in \overline{\mathcal{R}(A)},\,\|x\|_A=1\right\}\nonumber\\
 &=\sup\left\{\|AT^nx\|_{\mathbf{R}(A^{1/2})}\,;\;x\in \overline{\mathcal{R}(A)},\,\|Ax\|_{\mathbf{R}(A^{1/2})}=1\right\}\nonumber\\
  &=\sup\left\{\|\widetilde{T^n}Ax\|_{\mathbf{R}(A^{1/2})}\,;\;x\in \overline{\mathcal{R}(A)},\,\|Ax\|_{\mathbf{R}(A^{1/2})}=1\right\}\nonumber\\
 & =\|\widetilde{T^n}\|_{\mathcal{B}(\mathbf{R}(A^{1/2}))},
\end{align}
for every $n\in \mathbb{N}^*$. Moreover, we shall prove that $\widetilde{T^n}=(\widetilde{T})^n$ for all $n\in \mathbb{N}^*$. Indeed, by induction we first show that $W_AT^n=(\widetilde{T})^nW_A$ for all  $n\in \mathbb{N}^*$. By Proposition \ref{prop_arias} the result holds for $n=1$. Assume that $W_AT^n=(\widetilde{T})^nW_A$ is true for some $n\in \mathbb{N}^*$. This implies that
\begin{align*}
W_AT^{n+1}
& =W_AT^{n}T \\
 &=(\widetilde{T})^nW_A T=(\widetilde{T})^{n+1}W_A.
\end{align*}
Further, since $\widetilde{T^n}$ is the unique operator which satisfies $W_AT^n=\widetilde{T^n}W_A$, then $\widetilde{T^n}=(\widetilde{T})^n$ for all  $n\in \mathbb{N}^*$. Hence, \eqref{compar01} yields
$$r_A(T)=r(\widetilde{T}),$$
where $r(\widetilde{T})$ denotes the classical spectral radius of $\widetilde{T}$ defined on $\mathcal{R}(A^{1/2})$.

It is known that  $r(\widetilde{T})\leq \omega (\widetilde{T})$, where $\omega(\cdot)$ denotes the classical numerical radius. Therefore, our proof will be complete if we show that $\omega_A(T)=\omega(\widetilde{T}).$

Notice that
\begin{align*}
\omega_A(T)
& =\sup\left\{|\langle Tx\mid x\rangle_A|\,;\;x\in \mathcal{H},\;\|x\|_A= 1\right\} \\
 &=\sup\left\{|( ATx, Ax)|\,;\;x\in \mathcal{H},\;\|Ax\|_{\mathbf{R}(A^{1/2})}= 1\right\}\\
  &=\sup\left\{|( \widetilde{T}Ax, Ax)|\,;\;x\in \mathcal{H},\;\|Ax\|_{\mathbf{R}(A^{1/2})}= 1\right\}.
\end{align*}
By using the decomposition $\mathcal{H}=\mathcal{N}(A^{1/2})\oplus \overline{\mathcal{R}(A^{1/2})}$, we get
$$\omega_A(T)=\sup\left\{|( \widetilde{T}Ax, Ax)|\,;\;x\in \overline{\mathcal{R}(A^{1/2})},\;\|Ax\|_{\mathbf{R}(A^{1/2})}= 1\right\}.$$
Moreover, the classical numerical radius of $\widetilde{T}$ on the Hilbert space $\mathbf{R}(A^{1/2})$ is given by
\begin{align*}
\omega(\widetilde{T})
& =\sup\left\{|( \widetilde{T}y, y)|\,;\;y\in \mathcal{R}(A^{1/2}),\;\|y\|_{\mathbf{R}(A^{1/2})}= 1\right\}\\
& =\sup\left\{|( \widetilde{T}A^{1/2}x, A^{1/2}x)|\,;\;x\in \mathcal{H},\;\|A^{1/2}x\|_{\mathbf{R}(A^{1/2})}= 1\right\}\\
& =\sup\left\{|( \widetilde{T}A^{1/2}x, A^{1/2}x)|\,;\;x\in \overline{\mathcal{R}(A^{1/2})},\;\|A^{1/2}x\|_{\mathbf{R}(A^{1/2})}= 1\right\}
\end{align*}
Since $A=A^{1/2}A^{1/2}$, it follows that $\omega_A(T)\leq \omega(\widetilde{T})$. To show the converse inequality (i.e., $\omega_A(T)\geq \omega(\widetilde{T})$), let
$$\alpha \in \left\{|( \widetilde{T}A^{1/2}x, A^{1/2}x)|\,;\;x\in \overline{\mathcal{R}(A^{1/2})},\;\|A^{1/2}x\|_{\mathbf{R}(A^{1/2})}= 1\right\}.$$
Then there exists $x\in \overline{\mathcal{R}(A^{1/2})}$ such that $\|A^{1/2}x\|_{\mathbf{R}(A^{1/2})}= 1$ and $\alpha=|( \widetilde{T}A^{1/2}x, A^{1/2}x)|$. In view of Lemma \ref{density01}, there exists a sequence $(x_n)_n\subset \mathcal{H}$ (which can chosen to be in $\overline{\mathcal{R}(A^{1/2})}$ because of the decomposition $\mathcal{H}=\mathcal{N}(A^{1/2})\oplus \overline{\mathcal{R}(A^{1/2})}$) such that $\displaystyle\lim_{n\to +\infty}\|Ax_n-A^{1/2}x\|_{\mathbf{R}(A^{1/2})}=0$. Hence we obtain
$$\alpha=\lim_{n\to +\infty}|( \widetilde{T}Ax_n, Ax_n)|\;\;\text{and}\;\;\lim_{n\to +\infty}\|Ax_n\|_{\mathbf{R}(A^{1/2})}= 1.$$
Let $y_n:=\frac{x_n}{\|Ax_n\|_{\mathbf{R}(A^{1/2})}}$, then clearly we deduce that
$$\alpha\in \overline{\left\{|( \widetilde{T}Ax, Ax)|\,;\;x\in \overline{\mathcal{R}(A^{1/2})},\;\|Ax\|_{\mathbf{R}(A^{1/2})}= 1\right\}}^{|\cdot|}.$$
This shows the desired inequality because $\sup \Lambda=\sup \overline{\Lambda}$ for any subset $\Lambda$ of $\mathbb{C}$. So, we deduce that $\omega_A(T)=\omega(\widetilde{T})$ and thus the proof is complete.
\end{proof}

By combining Theorem \ref{thm_spec_numer} together with Proposition \ref{anradius} we get the following corollary.
\begin{corollary}
If $T\in \mathcal{B}_{A^{1/2}}(\mathcal{H})$, then
\begin{equation}\label{radnumnorm}
r_A(T)\leq \omega_A(T)\leq \|T\|_A.
\end{equation}
\end{corollary}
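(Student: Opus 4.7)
The plan is essentially to concatenate two inequalities that are already available in the paper, so the proof is completely routine and requires no new ideas. The key observation is that both inequalities needed are stated directly above: Theorem \ref{thm_spec_numer} gives the left-hand inequality $r_A(T)\leq \omega_A(T)$, and Proposition \ref{anradius} gives (in particular) the right-hand inequality $\omega_A(T)\leq \|T\|_A$.

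Concretely, I would first invoke Theorem \ref{thm_spec_numer}, whose hypothesis $T\in \mathcal{B}_{A^{1/2}}(\mathcal{H})$ is precisely the hypothesis of the corollary, to obtain
\[
r_A(T)\leq \omega_A(T).
\]
Then I would apply the upper bound in the inequality \eqref{refine1} from Proposition \ref{anradius}, again under the same hypothesis, to obtain
\[
\omega_A(T)\leq \|T\|_A.
\]
Chaining these two inequalities yields \eqref{radnumnorm}.

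There is no real obstacle here; the corollary is a one-line consequence of the two preceding results, and the only thing to check is that the hypothesis $T\in\mathcal{B}_{A^{1/2}}(\mathcal{H})$ matches in both invocations, which it does verbatim. It may still be worth remarking in passing that \eqref{radnumnorm} is the natural semi-Hilbertian analogue of the classical chain $r(T)\leq \omega(T)\leq \|T\|$ on a Hilbert space, as recalled in the introduction, and that it will be used later to characterize $A$-normaloid operators via the equalities $r_A(T)=\omega_A(T)=\|T\|_A$.
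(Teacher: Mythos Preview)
Your proposal is correct and matches the paper's own proof exactly: the corollary is stated as an immediate combination of Theorem \ref{thm_spec_numer} and Proposition \ref{anradius}, which is precisely what you do.
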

Notice that the both inequalities in \eqref{radnumnorm} can simultaneously be strict even if $A=I$. Indeed, let $T=\begin{pmatrix} 1 & 1 \\ 0 & 1 \end{pmatrix}$ be an operator on $\mathbb{C}^2$. It can be checked that $r(T)=1$, $\|T\|=\frac{\sqrt{5}+1}{2}$ and $\omega(T)=\tfrac{3}{2}$.

Now, we introduce the following definition.
\begin{definition}
An operator $T\in \mathcal{B}_{A^{1/2}}(\mathcal{H})$ is said to be
\begin{itemize}
  \item [(i)] $A$-normaloid if $r_A(T)=\|T\|_A$.
  \item [(ii)] $A$-spectraloid if $r_A(T)=\omega_A(T)$.
\end{itemize}
\end{definition}

Notice that every $A$-normaloid operator is $A$-spectraloid as it is shown in the following theorem.
\begin{theorem}\label{theonormaloid}
Let $T\in \mathcal{B}_{A^{1/2}}(\mathcal{H})$ be an $A$-normaloid operator. Then,
$$r_A(T)=\omega_A(T)=\|T\|_A.$$
\end{theorem}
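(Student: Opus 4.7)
The plan is extremely short because the corollary just established, namely the chain
\[
r_A(T)\leq \omega_A(T)\leq \|T\|_A
\]
in \eqref{radnumnorm}, already does almost all of the work. Once I know $T$ is $A$-normaloid, I have by definition $r_A(T)=\|T\|_A$, so plugging this into the left end of the chain squeezes everything between two equal quantities.

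More precisely, the argument I would carry out is: assume $T\in \mathcal{B}_{A^{1/2}}(\mathcal{H})$ is $A$-normaloid; invoke \eqref{radnumnorm} to get
\[
\|T\|_A \;=\; r_A(T)\;\leq\; \omega_A(T)\;\leq\; \|T\|_A,
\]
where the first equality uses the definition of $A$-normaloid and the two inequalities come directly from the corollary obtained by combining Theorem \ref{thm_spec_numer} with Proposition \ref{anradius}. All three numbers must then coincide, giving $r_A(T)=\omega_A(T)=\|T\|_A$ as required.

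There is essentially no obstacle here; the only nontrivial ingredient is \eqref{radnumnorm} itself, whose proof (Theorem \ref{thm_spec_numer} together with the already-quoted Proposition \ref{anradius}) has been completed in the previous pages via the transfer to the auxiliary Hilbert space $\mathbf{R}(A^{1/2})$. So the present theorem is really just a formal consequence, and the proof should occupy only two or three lines.
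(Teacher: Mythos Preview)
Your proposal is correct and matches the paper's own proof essentially verbatim: the paper simply writes ``Follows immediately by using \eqref{radnumnorm},'' which is exactly the squeezing argument you describe.
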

\begin{proof}
Follows immediately by using \eqref{radnumnorm}.
\end{proof}
\begin{remark}
It is important to note that an $A$-spectraloid operator is not necessarily $A$-normaloid even if $A=I$. Indeed, let $T=\begin{pmatrix} 1&0&0\\0&0&2\\0&0&0\end{pmatrix}$. It can be seen that $r(T)=1$, $\|T\|=2$ and $\omega(T)=1$. So, $r(T)=\omega(T)$ however, $r(T)\neq \|T\|$.
\end{remark}

The following proposition gives some characterizations of $A$-normaloid operators. Notice that characterizations of normaloid Hilbert space operators can be found in \cite{chanchan} and the references therein.
\begin{proposition}\label{charactenormailoid}
Let $T\in \mathcal{B}_{A^{1/2}}(\mathcal{H})$. Then, the following assertions are equivalent:
\begin{itemize}
  \item [(1)] $T$ is $A$-normaloid.
  \item [(2)] $\|T^n\|_A=\|T\|_A^n$ for all $n\in \mathbb{N}^*$.
  \item [(3)] $\omega_A(T)=\|T\|_A$.
    \item [(4)] There exists a sequence $(x_n)_n\subset\mathcal{H}$ such that $\|x_n\|_A=1$,
    \begin{equation*}
    \lim_{n\to \infty}\|T x_n\|_A=\|T\|_A \;\text{  and  }\; \lim_{n\to \infty}|\langle T x_n\mid x_n\rangle_A|=\omega_A(T).
    \end{equation*}
\end{itemize}
\end{proposition}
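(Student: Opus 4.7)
The plan is the cycle $(1)\Rightarrow(2)\Rightarrow(3)\Rightarrow(4)\Rightarrow(1)$, with only the last arrow requiring real work.

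For $(1)\Rightarrow(2)$, I would combine submultiplicativity \eqref{crucial0} (giving $\|T^n\|_A^{1/n}\le\|T\|_A$) with the infimum formula for $r_A(T)$ from Theorem~\ref{thm_spectral} (giving $\|T^n\|_A^{1/n}\ge r_A(T)=\|T\|_A$ under the hypothesis), forcing equality throughout. For $(2)\Rightarrow(3)$, the hypothesis yields $r_A(T)=\lim\|T^n\|_A^{1/n}=\|T\|_A$, so the sandwich $r_A(T)\le\omega_A(T)\le\|T\|_A$ of \eqref{radnumnorm} collapses to $\omega_A(T)=\|T\|_A$. For $(3)\Rightarrow(4)$, I would take $A$-unit vectors $x_n$ with $|\langle Tx_n\mid x_n\rangle_A|\to\omega_A(T)$ by the supremum definition, and apply Cauchy--Schwarz: $|\langle Tx_n\mid x_n\rangle_A|\le\|Tx_n\|_A\le\|T\|_A=\omega_A(T)$ squeezes the middle term so that $\|Tx_n\|_A\to\|T\|_A$.

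For the substantive step $(4)\Rightarrow(1)$, I would pass to the Hilbert-space picture from Theorem~\ref{thm_spec_numer}: setting $y_n=W_Ax_n\in\mathbf{R}(A^{1/2})$, the identities $\|T\|_A=\|\widetilde T\|_{\mathcal{B}(\mathbf{R}(A^{1/2}))}$, $\omega_A(T)=\omega(\widetilde T)$, and $r_A(T)=r(\widetilde T)$ established inside that theorem reduce (4) to a statement about the genuine Hilbert-space operator $\widetilde T$. After a unimodular rotation of $\widetilde T$ (which preserves every quantity in the proposition) I may assume $\mu_n:=(\widetilde T y_n,y_n)\to\omega(\widetilde T)\ge 0$ is real, and the decomposition $\widetilde T y_n=\mu_n y_n+z_n$ with $z_n\perp y_n$ gives
\[
\|z_n\|_{\mathbf{R}(A^{1/2})}^2=\|\widetilde T y_n\|_{\mathbf{R}(A^{1/2})}^2-|\mu_n|^2\;\longrightarrow\;\|\widetilde T\|^2-\omega(\widetilde T)^2.
\]
The key claim will be that this limit vanishes. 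I would extract it by a perturbation argument: if $\|z_n\|$ stayed bounded away from $0$, a carefully chosen rotation of $y_n$ in the direction $z_n/\|z_n\|$ would produce unit vectors whose numerical-radius expression asymptotically exceeds $\omega(\widetilde T)$, contradicting the defining supremum. Once $\|z_n\|\to 0$, the limit $\lambda$ of $\mu_n$ is an approximate eigenvalue of $\widetilde T$, so $|\lambda|=\omega(\widetilde T)\le r(\widetilde T)$; combined with \eqref{radnumnorm} this yields $r(\widetilde T)=\|\widetilde T\|$, and translating back through $W_A$ gives $r_A(T)=\|T\|_A$, which is (1).

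The hard part will be this perturbation step. A naive first-order expansion of $|(\widetilde T(y_n+tu_n),y_n+tu_n)|$ in a real parameter $t$ yields only a rigid identity between $(\widetilde T u_n,y_n)$ and $\|z_n\|$; a second-order analysis improves the bound only to $\|\widetilde T\|\le\sqrt{2}\,\omega(\widetilde T)$. Closing the gap to the desired equality $\|\widetilde T\|=\omega(\widetilde T)$ will require combining first- and second-order information together with complex rotations of the perturbation parameter, and this is where the delicate work lies.
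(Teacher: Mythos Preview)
Your implications $(1)\Rightarrow(2)\Rightarrow(3)\Rightarrow(4)$ are correct and match the paper's reasoning (the paper organizes them as three separate equivalences with $(1)$, but the content is the same: submultiplicativity plus the infimum formula for $(1)\Leftrightarrow(2)$, the sandwich \eqref{radnumnorm} for $(1)\Leftrightarrow(3)$, and the Cauchy--Schwarz squeeze for $(1)\Rightarrow(4)$). Both you and the paper then reduce $(4)\Rightarrow(1)$ to the corresponding statement for $\widetilde T\in\mathcal{B}(\mathbf{R}(A^{1/2}))$ via $y_n=W_Ax_n$.

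The divergence is in how that classical statement is handled. The paper does not prove it: it simply invokes \cite[Theorem~1]{chanchan}, which asserts precisely that a Hilbert-space operator admitting a sequence as in $(4)$ is normaloid. Your proposal instead tries to prove this from scratch by a perturbation argument, and here there is a genuine gap that you yourself flag. The first-order variation of $|(\,\widetilde T(y_n+tu_n),y_n+tu_n\,)|$ only pins down $(\widetilde Tu_n,y_n)\to -c$, and pushing to second order with complex rotations---as you correctly diagnose---stalls at $\|\widetilde T\|\le\sqrt{2}\,\omega(\widetilde T)$; combining orders does not by itself close the remaining factor. One way to finish is to pass to the Cartesian decomposition $e^{i\theta}\widetilde T=H+iK$: the hypothesis $|(\widetilde Ty_n,y_n)|\to\omega(\widetilde T)$ forces $(H-\omega)y_n\to 0$ (self-adjoint maximizing argument), while $\|\widetilde Ty_n\|\to\|\widetilde T\|$ then gives $\|Ky_n\|^2\to c^2$ and, after one more use of $H-\omega\le 0$, also $(H-\omega)Ky_n\to 0$ and $K^2y_n\to c^2y_n$. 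Splitting $y_n$ along the spectral projections of $K$ near $\pm c$ produces unit vectors $\hat p_n$ with $|(\widetilde T\hat p_n,\hat p_n)|\to\sqrt{\omega^2+c^2}$, which contradicts the definition of $\omega(\widetilde T)$ unless $c=0$. This works, but it is substantially more than what your sketch contains and is essentially a rederivation of the cited result; the paper's route of quoting \cite{chanchan} is the efficient choice here.
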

\begin{proof}
$(1)\Leftrightarrow(2):$ Assume that $T$ is $A$-normaloid, then $r_A(T)=\|T\|_A$. Since $T\in \mathcal{B}_{A^{1/2}}(\mathcal{H})$, then $\|T^n\|_A\leq\|T\|_A^n$ for all $n\in \mathbb{N}$. On the other hand, by using Proposition \ref{spectradiprop} and Corollary \ref{anradius} we obtain
$$\|T^n\|_A\geq r_A(T^n)=r_A(T)^n=\|T\|_A^n,$$
for all $n\in \mathbb{N}^*$. Conversely, we have $\|T\|_A=\|T^n\|_A^{1/n}\xrightarrow{n \to +\infty} r_A(T)$. So, $\|T\|_A=r_A(T)$.

$(1)\Leftrightarrow(3):$ According to the proof of Theorem \ref{thm_spec_numer}, we have $\omega_A(T)=\omega(\widetilde{T})$,  $\|T\|_A=\|\widetilde{T}\|_{\mathcal{B}(\mathbf{R}(A^{1/2}))}$ and $r_A(T)=r(\widetilde{T})$. So, $T\in \mathcal{B}_{A^{1/2}}(\mathcal{H})$ is $A$-normaloid if and only if $\widetilde{T}\in \mathcal{B}(\mathbf{R}(A^{1/2}))$ is a normaloid operator. On the other hand, since $\widetilde{T}\in \mathcal{B}(\mathbf{R}(A^{1/2}))$, then \cite[Proposition 6.27]{kub002} gives $r(\widetilde{T})=\|\widetilde{T}\|_{\mathcal{B}(\mathbf{R}(A^{1/2}))}$ if and only if $\omega(\widetilde{T})=\|\widetilde{T}\|_{\mathcal{B}(\mathbf{R}(A^{1/2}))}$. Therefore, the proof is complete.

$(1)\Leftrightarrow(4):$ Assume that there exists a sequence $(x_n)_n\subset\mathcal{H}$ such that $\|x_n\|_A=1$, $ \lim_{n\to \infty}\|T x_n\|_A=\|T\|_A$ and
$\lim_{n\to \infty}|\langle T x_n\mid x_n\rangle_A|=\omega_A(T)$. Set $y_n=Ax_n$. Then by using \eqref{usefuleq01}, we infer that $\|y_n\|_{\mathbf{R}(A^{1/2})}=\|Ax_n\|_{\mathbf{R}(A^{1/2})}=\|x_n\|_A=1$. Moreover, it can be seen that $\|T x_n\|_A=\|\widetilde{T} y_n\|_{\mathbf{R}(A^{1/2})}$ and $\langle T x_n\mid x_n\rangle_A=(\widetilde{T}y_n,y_n)$. So, we obtain
    \begin{equation*}
    \lim_{n\to \infty}\|\widetilde{T} y_n\|_{\mathbf{R}(A^{1/2})}=\|\widetilde{T}\|_{\mathcal{B}(\mathbf{R}(A^{1/2}))} \;\text{  and  }\; \lim_{n\to \infty}|(\widetilde{T} y_n\mid x_n)|=\omega(\widetilde{T}).
    \end{equation*}
This implies, by \cite[Theorem 1]{chanchan}, that $\widetilde{T}$ is a normaloid operator on $\mathbf{R}(A^{1/2})$. Hence $T$ is an $A$-normaloid operator.

Conversely, assume that $T$ is $A$-normaloid, then by assertion $(3)$ we see that $\omega_A(T)=\|T\|_A$. On the other hand, by definition of the $A$-numerical radius, there exists a sequence $(x_n)_n\subset\mathcal{H}$ such that $\|x_n\|_A=1$ and
$$\lim_{n\to \infty}|\langle T x_n\mid x_n\rangle_A|=\omega_A(T).$$
Moreover,
$$\|T\|_A\geq \|Tx_n\|_A\geq |\langle T x_n\mid x_n\rangle_A|\geq \omega_A(T)-\varepsilon=\|T\|_A-\varepsilon,$$
for every $\varepsilon> 0$ and $n$ large enough. Hence, $\displaystyle\lim_{n\to \infty}\|T x_n\|_A=\|T\|_A$.
\end{proof}
\begin{remark}\label{remuseful}
It follows from the proof of Proposition \ref{charactenormailoid} that if $T$ is an $A$-normaloid operator, then every $(x_n)_n\subset\mathcal{H}$ such that $\|x_n\|_A=1$ and $\displaystyle\lim_{n\to \infty}|\langle T x_n\mid x_n\rangle_A|=\omega_A(T)$ satisfies $\displaystyle\lim_{n\to \infty}\|T x_n\|_A=\|T\|_A$. However, it should be mentioned that not every $(x_n)_n\subset\mathcal{H}$ such that $\|x_n\|_A=1$ and $\displaystyle\lim_{n\to \infty}\|T x_n\|_A=\|T\|_A$ satisfies $\displaystyle\lim_{n\to \infty}|\langle T x_n\mid x_n\rangle_A|=\omega_A(T)$ even if $T$ is an $A$-normaloid operator (see \cite[p. 887]{chanchan}).
\end{remark}

A characterization of $A$-normaloid operators can be stated in terms of the $A$-maximal numerical range of operators which was introduced in \cite{bakfeki01} as follows.
\begin{definition}(\cite{bakfeki01})
Let $T\in \mathcal{B}^A(\mathcal{H})$. The $A$-maximal numerical range of $T$ is given by
$$W_{\max}^A(T)
 =\left\{\lambda\in \mathbb{C}\,;\exists\,(x_n)_n\subset \mathcal{H}\,;\,\|x_n\|_A=1,\displaystyle\lim_{n\to+\infty}\langle T x_n\mid x_n\rangle_A=\lambda\text{ and }\displaystyle\lim_{n\to+\infty}\|Tx_n\|_A=\|T\|_A\right\}.$$
\end{definition}

Similarly to the $A$-numerical radius of operators, we define the $A$-maximal numerical radius of operators as follows.
\begin{definition}
Let $T\in \mathcal{B}^A(\mathcal{H})$. The $A$-maximal numerical radius of $\mathbf{T}$ is given by
$$
\omega_{\max}^A(T)=\sup\{|\lambda|\,;\;\; \lambda\in W_{\max}^A(T)\,\}.
$$
\end{definition}
Now, we state the following characterization of $A$-normaloid operators.
\begin{proposition}
Let $T\in \mathcal{B}_{A^{1/2}}(\mathcal{H})$. Then, $T$ is $A$-normaloid if and only if $\omega_A(T)=\omega_{\max}^A(T)$.
\end{proposition}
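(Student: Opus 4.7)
The plan is to deduce both directions from Proposition \ref{charactenormailoid}(4), which characterizes $A$-normaloidness through the existence of a single sequence $(x_n)$ with $\|x_n\|_A=1$ that simultaneously realizes $\|Tx_n\|_A\to\|T\|_A$ and $|\langle Tx_n\mid x_n\rangle_A|\to\omega_A(T)$. This matches nicely with the definition of $W_{\max}^A(T)$, which demands the same type of joint asymptotics, but with $\langle Tx_n\mid x_n\rangle_A$ itself converging to a scalar $\lambda$.

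For the forward implication, I would first note the trivial bound $\omega_{\max}^A(T)\leq\omega_A(T)$, which holds without any normaloid assumption: any $\lambda\in W_{\max}^A(T)$ is the limit of $\langle Tx_n\mid x_n\rangle_A$ along a sequence of $A$-unit vectors, so $|\lambda|\leq\omega_A(T)$. To obtain the reverse bound under the normaloid assumption, apply Proposition \ref{charactenormailoid}(4) to extract a sequence $(x_n)$ with $\|x_n\|_A=1$, $\|Tx_n\|_A\to\|T\|_A$, and $|\langle Tx_n\mid x_n\rangle_A|\to\omega_A(T)$. The bounded scalar sequence $\langle Tx_n\mid x_n\rangle_A$ admits, by Bolzano--Weierstrass, a subsequential limit $\lambda\in\mathbb{C}$ with $|\lambda|=\omega_A(T)$. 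By construction this $\lambda$ lies in $W_{\max}^A(T)$, yielding $\omega_{\max}^A(T)\geq|\lambda|=\omega_A(T)$.

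For the reverse implication, assume $\omega_A(T)=\omega_{\max}^A(T)$. The degenerate case $\|T\|_A=0$ is handled at once, since then $r_A(T)\leq\|T\|_A=0$. Otherwise Proposition \ref{anradius} ensures $\omega_A(T)>0$, so $\omega_{\max}^A(T)>0$ and (by selecting any $A$-unit-norm sequence realizing $\|T\|_A$ and extracting a convergent subsequence of $\langle Tx_n\mid x_n\rangle_A$) $W_{\max}^A(T)$ is non-empty. Choose a sequence $(\lambda_k)\subset W_{\max}^A(T)$ with $|\lambda_k|\to\omega_A(T)$, each witnessed by a sequence $(x_n^{(k)})_n$ as in the definition. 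A diagonal extraction $y_k:=x_{n_k}^{(k)}$, with $n_k$ chosen large enough that both $|\langle Ty_k\mid y_k\rangle_A-\lambda_k|<1/k$ and $\bigl|\|Ty_k\|_A-\|T\|_A\bigr|<1/k$, produces a sequence with $\|y_k\|_A=1$, $\|Ty_k\|_A\to\|T\|_A$, and $|\langle Ty_k\mid y_k\rangle_A|\to\omega_A(T)$. The implication $(4)\Rightarrow(1)$ of Proposition \ref{charactenormailoid} then gives that $T$ is $A$-normaloid.

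The only mildly delicate step is the diagonal extraction, which is required because $\omega_{\max}^A(T)$ is defined as a supremum and may not in general be attained by any single element of $W_{\max}^A(T)$. Apart from that, the argument is a clean reshuffling of sequential data and, in particular, avoids any passage through the auxiliary Hilbert space $\mathbf{R}(A^{1/2})$.
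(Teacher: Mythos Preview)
Your proof is correct. The forward direction matches the paper's almost exactly; your explicit appeal to Bolzano--Weierstrass even patches a small imprecision in the paper, which writes ``$\omega_A(T)\in W_{\max}^A(T)$'' without first passing to a subsequence along which $\langle Tx_n\mid x_n\rangle_A$ (rather than merely its modulus) converges. In the reverse direction the paper takes a shorter route: it invokes the fact, established in \cite{bakfeki01}, that $W_{\max}^A(T)$ is non-empty and compact, so the supremum $\omega_{\max}^A(T)$ is actually attained by some $\lambda\in W_{\max}^A(T)$; the witnessing sequence for that single $\lambda$ then feeds directly into Proposition~\ref{charactenormailoid}(4), and no diagonal extraction is needed. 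Your version trades that external citation for a self-contained diagonal argument, which is a reasonable exchange---slightly more work, but no reliance on the compactness result.
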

\begin{proof}
Assume that $T$ is $A$-normaloid. Then, by Proposition \ref{charactenormailoid}, we have $\omega_A(T)=\|T\|_A$. We shall prove that $\omega_A(T)=\omega_{\max}^A(T)$. It can be observed that $W_{\max}^A(T)\subset \overline{W_A(T)}$, where $\overline{W_A(T)}$ is denoted to be the closure of the $A$-numerical range of $T$. Then, $\omega_{\max}^A(T)\leq\omega_A(T)$. On the other hand, by definition of $\omega_A(T)$, there exists a sequence $(x_n)_n\subset\mathcal{H}$ such that $\|x_n\|_A=1$ and $\displaystyle\lim_{n\to \infty}|\langle T x_n\mid x_n\rangle_A|=\omega_A(T)=\|T\|_A$. This yields, by Remark \ref{remuseful}, that $\displaystyle\lim_{n\to \infty}\|Tx_n\|_A=\|T\|_A$. Hence, $\omega_A(T)\in W_{\max}^A(T)$ and so $\omega_A(T)\leq \omega_{\max}^A(T)$. Therefore, $\omega_A(T)=\omega_{\max}^A(T)$.

Conversely, it is well known that $W_{\max}^A(T)$ is non-empty and a compact subset of $\mathbb{C}$ (see \cite{bakfeki01}). This implies that $\omega_{\max}^A(T)\in W_{\max}^A(T)$. Hence, there exists a sequence $(x_n)_n\subset\mathcal{H}$ such that $\|x_n\|_A=1$, $\displaystyle\lim_{n\to \infty}|\langle T x_n\mid x_n\rangle_A|=\omega_{\max}^A(T)=\omega_A(T)$ and $\displaystyle\lim_{n\to \infty}\|Tx_n\|_A=\|T\|_A$. Therefore, by Proposition \ref{charactenormailoid}, we deduce that $T$ is $A$-normaloid.
\end{proof}

Spectraloid operators are characterized by the equality $\omega(T^n)=\omega(T)^n$ for every natural number $n$ (see \cite{furutake}). Now, we aim to give here a similar characterization of $A$-spectraloid operators. To do this, we first state the following result.

\begin{theorem}
If $T\in \mathcal{B}_{A^{1/2}}(\mathcal{H})$, then
\begin{equation}\label{spetralnumerical01}
\omega_A(T^n)\leq \omega_A(T)^n,\;\forall\,n\in \mathbb{N}^*.
\end{equation}
\end{theorem}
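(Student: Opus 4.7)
The plan is to reduce the inequality to the classical Berger power inequality on a genuine Hilbert space, by means of the dictionary between $\mathcal{B}_{A^{1/2}}(\mathcal{H})$ and $\mathcal{B}(\mathbf{R}(A^{1/2}))$ provided by Proposition \ref{prop_arias}. This machinery is already in place from the proof of Theorem \ref{thm_spec_numer}, so the argument will essentially be a two-line chain of identities combined with one known inequality.

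First I would observe that since $T \in \mathcal{B}_{A^{1/2}}(\mathcal{H})$, the same is true of $T^n$ for every $n \in \mathbb{N}^{*}$, so Proposition \ref{prop_arias} furnishes a unique $\widetilde{T^n} \in \mathcal{B}(\mathbf{R}(A^{1/2}))$ satisfying $W_A T^n = \widetilde{T^n} W_A$. Next I would recall the two identities that were established inside the proof of Theorem \ref{thm_spec_numer}: by uniqueness together with a short induction, one has
\begin{equation*}
\widetilde{T^n} = (\widetilde{T})^n, \qquad \forall\, n \in \mathbb{N}^{*},
\end{equation*}
and moreover
\begin{equation*}
\omega_A(S) = \omega(\widetilde{S}), \qquad \forall\, S \in \mathcal{B}_{A^{1/2}}(\mathcal{H}),
\end{equation*}
where $\omega(\cdot)$ denotes the classical numerical radius on the Hilbert space $\mathbf{R}(A^{1/2})$.

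Applying the second identity to both $S = T^n$ and $S = T$, and invoking the classical Berger power inequality \eqref{spetralclnumerical01} for the operator $\widetilde{T}$ on $\mathbf{R}(A^{1/2})$, I would conclude
\begin{equation*}
\omega_A(T^n) = \omega(\widetilde{T^n}) = \omega\bigl((\widetilde{T})^n\bigr) \leq \omega(\widetilde{T})^n = \omega_A(T)^n,
\end{equation*}
which is precisely \eqref{spetralnumerical01}. The only potential obstacle is a conceptual one rather than a technical one: one must be certain that the $A$-numerical radius of $T$ coincides with the genuine numerical radius of the induced operator $\widetilde{T}$ on $\mathbf{R}(A^{1/2})$, so that Berger's inequality applies directly. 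This equality, however, is exactly what was proved in the last part of the argument for Theorem \ref{thm_spec_numer}, and so no further density or approximation argument is needed here.
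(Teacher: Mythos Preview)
Your proposal is correct and is essentially identical to the paper's own proof: both reduce the inequality to Berger's classical power inequality on the Hilbert space $\mathbf{R}(A^{1/2})$ via the identities $\widetilde{T^n}=(\widetilde{T})^n$ and $\omega_A(\cdot)=\omega(\widetilde{\,\cdot\,})$ established in the proof of Theorem~\ref{thm_spec_numer}. The paper's version is simply a terser write-up of the very chain of equalities you wrote down.
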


\begin{proof}
Proceeding as in the proof of Theorem \ref{thm_spec_numer} and using \eqref{spetralclnumerical01} yields
\begin{align*}
\omega_A(T^n)
& = \omega(\widetilde{T^n})\\
 &=\omega(\widetilde{T}^n)\\
 &\leq \omega(\widetilde{T})^n= \omega_A(T)^n.
\end{align*}
So, \eqref{spetralnumerical01} is proved.
\end{proof}
Now, we are in a position to prove the next theorem.
\begin{theorem}
Let $T\in \mathcal{B}_{A^{1/2}}(\mathcal{H})$. Then, the following assertions are equivalent:
\begin{itemize}
  \item [(1)] $T$ is $A$-spectraloid.
  \item [(2)] $\omega_A(T^n)=\omega_A(T)^n$ for all $n\in \mathbb{N}^*$.
\end{itemize}
\end{theorem}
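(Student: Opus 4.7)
The plan is to deduce both implications purely from the ingredients already established in the paper: the power inequality $\omega_A(T^n)\leq\omega_A(T)^n$, the chain $r_A(T)\leq\omega_A(T)\leq\|T\|_A$ from \eqref{radnumnorm}, the identity $r_A(T^n)=r_A(T)^n$ from Proposition \ref{spectradiprop}(3), and the limit formula $r_A(T)=\lim_n\|T^n\|_A^{1/n}$ from Theorem \ref{thm_spectral}. No further passage through the operator $\widetilde{T}$ on $\mathbf{R}(A^{1/2})$ is needed, although in principle one could reduce the whole statement to the classical spectraloid characterization of Furuta--Takeda via that correspondence.

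For the implication $(1)\Rightarrow(2)$, I would fix $n\in\mathbb{N}^*$ and sandwich $\omega_A(T^n)$ between two quantities that both equal $\omega_A(T)^n$ under the spectraloid hypothesis. The upper bound is immediate from \eqref{spetralnumerical01}. For the lower bound, I apply $r_A(T^n)\leq\omega_A(T^n)$ (which is just \eqref{radnumnorm} applied to $T^n\in\mathcal{B}_{A^{1/2}}(\mathcal{H})$), then rewrite $r_A(T^n)=r_A(T)^n$ using Proposition \ref{spectradiprop}(3), and finally invoke the assumption $r_A(T)=\omega_A(T)$ to conclude $\omega_A(T^n)\geq\omega_A(T)^n$.

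For the implication $(2)\Rightarrow(1)$, the bound $r_A(T)\leq\omega_A(T)$ already comes for free from \eqref{radnumnorm}, so only the reverse is at issue. Here I would start from the hypothesis $\omega_A(T)^n=\omega_A(T^n)$ and use the trivial inequality $\omega_A(T^n)\leq\|T^n\|_A$ (again \eqref{radnumnorm} applied to the $A$-bounded operator $T^n$). Taking $n$-th roots gives $\omega_A(T)\leq\|T^n\|_A^{1/n}$ for every $n\in\mathbb{N}^*$, and letting $n\to\infty$ together with Theorem \ref{thm_spectral} yields $\omega_A(T)\leq r_A(T)$, completing the equality.

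There is essentially no obstacle: both directions are two-line arguments once the preceding machinery is in hand. The only thing to be careful about is that the power inequality and $r_A(T^n)=r_A(T)^n$ are stated for elements of $\mathcal{B}_{A^{1/2}}(\mathcal{H})$, so I should note at the start of the proof that $T\in\mathcal{B}_{A^{1/2}}(\mathcal{H})$ implies $T^n\in\mathcal{B}_{A^{1/2}}(\mathcal{H})$ for every $n\in\mathbb{N}^*$, which is a remark already made after Theorem \ref{thm_spectral}, so the bounds from \eqref{radnumnorm} may be applied legitimately at each step.
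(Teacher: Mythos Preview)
Your proposal is correct and follows essentially the same route as the paper's own proof: both directions use the chain $\omega_A(T^n)\geq r_A(T^n)=r_A(T)^n=\omega_A(T)^n$ for $(1)\Rightarrow(2)$ (combined with the power inequality \eqref{spetralnumerical01}), and the bound $\omega_A(T)=\omega_A(T^n)^{1/n}\leq\|T^n\|_A^{1/n}\to r_A(T)$ for $(2)\Rightarrow(1)$. Your write-up is in fact slightly more explicit than the paper's in noting that $T^n\in\mathcal{B}_{A^{1/2}}(\mathcal{H})$ and in invoking Theorem \ref{thm_spectral} for the limit, but the argument is the same.
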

\begin{proof}
$(1)\Rightarrow(2):$ Assume that $T$ is $A$-spectraloid. In order to prove $(2)$ it suffices to show that $\omega_A(T^n)\geq\omega_A(T)^n$ for all $n\in \mathbb{N}^*$. By using Theorem \ref{thm_spec_numer} together with Proposition \ref{spectradiprop}, we see that
\begin{align*}
\omega_A(T^n)
& \geq r_A(T^n)\\
 &=\left[r_A(T)\right]^n=\omega_A(T)^n.
\end{align*}
$(1)\Rightarrow(2):$ By taking into consideration Theorem \ref{thm_spec_numer}, it suffices to prove that $r_A(T)\geq \omega_A(T)$. One has
$$\omega_A(T)=\omega_A(T^n)^{1/n}\leq \|T^n\|_A^{1/n},$$
for all $n\in \mathbb{N}^*$. This yields that $\omega_A(T)\leq r_A(T)$. Hence, the proof is complete.
\end{proof}

We close this section by refining the second inequality in \eqref{refine1}. Before that, it is useful to recall that F. Kittaneh proved in \cite{kittaneh01} that if $T\in \mathcal{B}(\mathcal{H})$, then
\begin{equation}\label{kitta01}
\omega(T)\leq \frac{1}{2}(\|T\|+\|T^2\|^{1/2}).
\end{equation}
This inequality has been used by Kittaneh in \cite{kittaneh01} in order to establish an estimate for the numerical radius of the Frobenius companion matrix. Now, we extend the inequality \eqref{kitta01} for the class of $A$-bounded operators as follows.
\begin{theorem}
Let $T\in \mathcal{B}_{A^{1/2}}(\mathcal{H})$. Then
\begin{equation}\label{kaisnew01}
\omega_A(T)\leq \frac{1}{2}(\|T\|_A+\|T^2\|_A^{1/2}).
\end{equation}
\end{theorem}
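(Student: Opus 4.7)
The plan is to lift $T$ to the Hilbert space $\mathbf{R}(A^{1/2})$ via Proposition \ref{prop_arias} and then invoke the classical Kittaneh inequality \eqref{kitta01} on the genuine Hilbert space $\mathbf{R}(A^{1/2})$, where all the standard inner-product tools (and Kittaneh's original argument) are directly available. In other words, I would reuse the very same dictionary that was established in the proof of Theorem \ref{thm_spec_numer} rather than try to run a bare-hands $A$-seminorm argument on $\mathcal{H}$.

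More concretely, first I would invoke Proposition \ref{prop_arias} to obtain the unique $\widetilde{T}\in\mathcal{B}(\mathbf{R}(A^{1/2}))$ with $W_AT=\widetilde{T}W_A$, and then record the three identities extracted inside the proof of Theorem \ref{thm_spec_numer}, namely
\[
\omega_A(T)=\omega(\widetilde{T}),\qquad \|T\|_A=\|\widetilde{T}\|_{\mathcal{B}(\mathbf{R}(A^{1/2}))},\qquad \widetilde{T^{n}}=(\widetilde{T})^{n}\ \text{for every } n\in\mathbb{N}^{\ast}.
\]
Combining the last identity (for $n=2$) with \eqref{compar01} gives $\|T^{2}\|_{A}=\|\widetilde{T}^{\,2}\|_{\mathcal{B}(\mathbf{R}(A^{1/2}))}$.

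Next I would apply Kittaneh's classical inequality \eqref{kitta01} to the Hilbert-space operator $\widetilde{T}\in\mathcal{B}(\mathbf{R}(A^{1/2}))$, obtaining
\[
\omega(\widetilde{T})\leq \tfrac{1}{2}\bigl(\|\widetilde{T}\|_{\mathcal{B}(\mathbf{R}(A^{1/2}))}+\|\widetilde{T}^{\,2}\|_{\mathcal{B}(\mathbf{R}(A^{1/2}))}^{1/2}\bigr).
\]
Substituting the three identities just listed converts this directly into the desired estimate \eqref{kaisnew01}, which finishes the argument.

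There is essentially no genuine obstacle here: the entire technical content — building $\widetilde{T}$, proving $\omega_A=\omega(\widetilde{T})$, $\|\cdot\|_A=\|\cdot\|_{\mathcal{B}(\mathbf{R}(A^{1/2}))}$, and $\widetilde{T^n}=\widetilde{T}^{\,n}$ — has already been carried out in the proof of Theorem \ref{thm_spec_numer}. The only point that requires a moment of care is to note that $T\in\mathcal{B}_{A^{1/2}}(\mathcal{H})$ ensures $T^{2}\in\mathcal{B}_{A^{1/2}}(\mathcal{H})$, so Proposition \ref{prop_arias} legitimately produces $\widetilde{T^{2}}$ and the product relation $\widetilde{T^{2}}=\widetilde{T}^{\,2}$ is meaningful. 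Once this bookkeeping is in place, the proof reduces to one line of substitution.
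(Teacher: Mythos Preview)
Your proposal is correct and matches the paper's own proof essentially line for line: the paper also lifts $T$ to $\widetilde{T}\in\mathcal{B}(\mathbf{R}(A^{1/2}))$, invokes the identities $\omega_A(T)=\omega(\widetilde{T})$, $\|T\|_A=\|\widetilde{T}\|_{\mathcal{B}(\mathbf{R}(A^{1/2}))}$, $\|T^2\|_A=\|\widetilde{T}^2\|_{\mathcal{B}(\mathbf{R}(A^{1/2}))}$ from the proof of Theorem~\ref{thm_spec_numer}, and then applies Kittaneh's inequality \eqref{kitta01}. Your write-up is, if anything, slightly more careful in justifying $\widetilde{T^{2}}=\widetilde{T}^{\,2}$.
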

\begin{proof}
Proceeding as in the proof of Theorem \ref{thm_spec_numer} yields that $\omega_A(T)=\omega(\widetilde{T})$,  $\|T\|_A=\|\widetilde{T}\|_{\mathcal{B}(\mathbf{R}(A^{1/2}))}$ and $\|T^2\|_A=\|\widetilde{T}^2\|_{\mathcal{B}(\mathbf{R}(A^{1/2}))}$. So, we get \eqref{kaisnew01} by using \eqref{kitta01}
\end{proof}
To see that \eqref{kaisnew01} is a refinement of the second inequality in \eqref{refine1}, it suffices to use the fact that $\|T^2\|_A\leq \|T\|_A^2$ for every $T\in \mathcal{B}_{A^{1/2}}(\mathcal{H})$.

\begin{corollary}\label{corsharp}
Let $T\in \mathcal{B}_{A^{1/2}}(\mathcal{H})$ be such that $AT^2=0$. Then, $\omega_A(T)=\frac{1}{2}\|T\|_A$.
\end{corollary}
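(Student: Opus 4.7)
The plan is to combine the newly established refinement \eqref{kaisnew01} with the lower bound from Proposition \ref{anradius}, using the hypothesis $AT^2=0$ to kill the $\|T^2\|_A^{1/2}$ term.

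More concretely, I would proceed as follows. First, observe that since $T\in \mathcal{B}_{A^{1/2}}(\mathcal{H})$, also $T^2\in \mathcal{B}_{A^{1/2}}(\mathcal{H})$, so the characterization recalled earlier in the preliminaries (\emph{if $S\in \mathcal{B}_{A^{1/2}}(\mathcal{H})$, then $\|S\|_A=0$ if and only if $AS=0$}) applies to $S=T^2$. The hypothesis $AT^2=0$ therefore gives $\|T^2\|_A=0$.

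Next, I substitute this into the inequality \eqref{kaisnew01} just established, namely $\omega_A(T)\leq \tfrac{1}{2}(\|T\|_A+\|T^2\|_A^{1/2})$, to obtain $\omega_A(T)\leq \tfrac{1}{2}\|T\|_A$.

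Finally, for the reverse inequality, I invoke the left-hand inequality in \eqref{refine1} from Proposition \ref{anradius}, which asserts $\tfrac{1}{2}\|T\|_A\leq \omega_A(T)$ for every $T\in \mathcal{B}_{A^{1/2}}(\mathcal{H})$. Combining the two bounds yields the claimed equality $\omega_A(T)=\tfrac{1}{2}\|T\|_A$. There is no real obstacle here: the substantive work has already been carried out in Proposition \ref{anradius} and in the proof of inequality \eqref{kaisnew01}, and the corollary is a one-line application once one notices that $AT^2=0$ forces $\|T^2\|_A=0$ via the seminorm characterization.
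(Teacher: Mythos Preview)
Your proposal is correct and matches the paper's own proof essentially line for line: both deduce $\|T^2\|_A=0$ from $AT^2=0$, then sandwich $\omega_A(T)$ between the lower bound $\tfrac{1}{2}\|T\|_A$ from \eqref{refine1} and the upper bound $\tfrac{1}{2}(\|T\|_A+\|T^2\|_A^{1/2})$ from \eqref{kaisnew01}. Your explicit justification that $T^2\in\mathcal{B}_{A^{1/2}}(\mathcal{H})$ and appeal to the seminorm characterization $\|S\|_A=0\Leftrightarrow AS=0$ is a small but welcome clarification that the paper leaves implicit.
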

\begin{proof}
Notice first that since $AT^2=0$, then $\|T^2\|_A=0$. Further, by combining \eqref{refine1} together with \eqref{kaisnew01}, we get
$$\frac{1}{2}\|T\|_A\leq \omega_A(T) \leq \frac{1}{2}(\|T\|_A+\|T^2\|_A^{1/2}),$$
for every $T\in \mathcal{B}_{A^{1/2}}(\mathcal{H})$. Therefore, if $\|T^2\|_A=0$ then $\omega_A(T)=\frac{1}{2}\|T\|_A$ as desired.
\end{proof}
\begin{remark}
By using Theorem \ref{theonormaloid} together with Corollary \ref{corsharp}, one observes that the inequalities in \eqref{refine1} are sharp.
\end{remark}
\section{Some examples of $A$-normaloid operators}
It is well know that we have the following proper inclusions between the following classes of Hilbert space operators (see \cite{furuta0})
\begin{equation}\label{inclusionsproper}
\text{Self-adjoint}\subsetneq \text{normal}\subsetneq\text{hyponormal}\subsetneq\text{paranormal}\subsetneq\text{normaloid}.
\end{equation}
Recall from \cite{furuta0} that an operator $T\in \mathcal{B}_{A^{1/2}}(\mathcal{H})$ is said to be paranormal if $\|Tx\|^2\leq \|T^2x\|$ for all $x\in \mathcal{H}$ such that $\|x\|=1$.

Our aim in this section is to give some examples of $A$-normaloid operators and to study the relationship between them. Mainly, we introduce the class of $A$-paranormal operators and we will show that the first inclusion in \eqref{inclusionsproper} fails to hold in the case of semi-Hilbertian space operators.

Recently, the class of $A$-hyponormal operators was introduced by O.A.M. Sid Ahmed et al. in \cite{sidb1}. Their definition extends the classical definition of hyponormal Hilbert space operators and reads as follows.
\begin{definition}
An operator $T\in\mathcal{B}_A(\mathcal{H})$ is said to be $A$-hyponormal if
$$[T^\sharp,T]:=T^{\sharp}T-TT^{\sharp}\geq_A 0,$$
or equivalently if $\|Tx\|_A\geq \|T^\sharp x\|_A$ for all $x\in \mathcal{H}$.
\end{definition}
If an operator $T\in\mathcal{B}_A(\mathcal{H})$ satisfies $[T^\sharp,T]=0$, then $T$ is said to be $A$-normal. For more details on these classes of operators, we refer the readers to \cite{sidb1,saddi,acg3} and the references therein. Before we move on, let us emphasize the following remark.
\begin{remark}
If $T$ is an hyponormal operator on a finite-dimensional Hilbert space $\mathcal{H}$, then $T$ is necessarily a normal operator. Indeed, since
 $T^*T - TT^* \ge 0$, then $\sigma(A^*A - AA^*) \subseteq [0, +\infty[$. Moreover, clearly we have $\operatorname{Tr}(T^*T-TT^*) = 0$. This implies that $\sigma(T^*T-TT^*) = \{0\}$ which in turn yields $T^*T-TT^* = 0$. However, an $A$-hyponormal operator is in general not $A$-normal even if $\mathcal{H}$ is finite-dimensional as it is shown in the following example.
\end{remark}
\begin{example}
Let $A=\begin{pmatrix}1&1\\1&1\end{pmatrix}$ and $T=\begin{pmatrix}2&2\\0&0\end{pmatrix}$ be operators acting on $\mathbb{C}^2$. One can verify that $T\in \mathcal{B}_A(\mathcal{H})$ and $T^\sharp=\begin{pmatrix}1&1\\1&1\end{pmatrix}$. Moreover, it can be seen that $[T^\sharp,T]\neq0$ and $A[T^\sharp,T]=0$. Hence, $T$ is $A$-hyponormal and not $A$-normal.
\end{example}

For the sequel, we set
$$S^A(0,1):=\{x \in \mathcal{H}\,;\;\|x\|_A=1\}.$$
In the following definition, we introduce the class of $A$-paranormal operators.
\begin{definition}
An operator $T\in \mathcal{B}_{A^{1/2}}(\mathcal{H})$ is said to be $A$-paranormal if
$$\|Tx\|_A^2\leq \|T^2x\|_A,\;\;\forall\,x\in S^A(0,1).$$
\end{definition}
\begin{lemma}
Let $T\in \mathcal{B}_A(\mathcal{H})$ be an $A$-hyponormal operator. Then, $T$ is $A$-paranormal.
\end{lemma}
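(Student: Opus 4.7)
The plan is to mimic the classical argument that hyponormal implies paranormal, adapted to the semi-Hilbertian setting by using $T^\sharp$ in place of $T^*$ and the Cauchy--Schwarz inequality for the positive semidefinite form $\langle\cdot\mid\cdot\rangle_A$.

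Fix $x\in \mathcal{H}$ with $\|x\|_A = 1$. The first step is to rewrite $\|Tx\|_A^2$ using the defining property of the $A$-adjoint: since $\langle Tu\mid v\rangle_A = \langle u\mid T^\sharp v\rangle_A$ for all $u,v$, we get
\begin{equation*}
\|Tx\|_A^2 \;=\; \langle Tx\mid Tx\rangle_A \;=\; \langle T^\sharp T x\mid x\rangle_A .
\end{equation*}
The second step is to apply the Cauchy--Schwarz inequality for the semi-inner product $\langle\cdot\mid\cdot\rangle_A$ (which holds for any positive semidefinite sesquilinear form) to the right-hand side, giving
\begin{equation*}
\langle T^\sharp T x\mid x\rangle_A \;\leq\; \|T^\sharp T x\|_A \, \|x\|_A \;=\; \|T^\sharp T x\|_A .
\end{equation*}

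The third step is to use $A$-hyponormality. By hypothesis, $\|Ty\|_A \geq \|T^\sharp y\|_A$ for all $y\in \mathcal{H}$; applying this with $y = Tx$ yields $\|T^\sharp(Tx)\|_A \leq \|T(Tx)\|_A = \|T^2 x\|_A$. Chaining the three inequalities produces
\begin{equation*}
\|Tx\|_A^2 \;\leq\; \|T^\sharp T x\|_A \;\leq\; \|T^2 x\|_A,
\end{equation*}
which is precisely the $A$-paranormality inequality.

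I do not anticipate a real obstacle: the only subtle point is justifying Cauchy--Schwarz for the degenerate form $\langle\cdot\mid\cdot\rangle_A$, but this is a standard property of positive semidefinite sesquilinear forms and is implicit throughout the paper. I should also remark at the outset that $T\in \mathcal{B}_A(\mathcal{H}) \subseteq \mathcal{B}_{A^{1/2}}(\mathcal{H})$, so that $T^2$ is still $A$-bounded and the statement of $A$-paranormality makes sense for $T$.
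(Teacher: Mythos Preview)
Your proof is correct and follows essentially the same route as the paper's own argument: rewrite $\|Tx\|_A^2$ as $\langle T^\sharp Tx\mid x\rangle_A$, apply Cauchy--Schwarz to bound this by $\|T^\sharp Tx\|_A$, and then use $A$-hyponormality at $y=Tx$ to get $\|T^\sharp Tx\|_A\le\|T^2x\|_A$. Your added remarks (the inclusion $\mathcal{B}_A(\mathcal{H})\subseteq\mathcal{B}_{A^{1/2}}(\mathcal{H})$ and the validity of Cauchy--Schwarz for the degenerate form) are welcome clarifications, but the core argument is identical.
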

\begin{proof}
Let $x \in S^A(0,1)$. Then, by using the fact $T$ is $A$-hyponormal, we see that
\begin{align*}
\|Tx\|^2
& = \langle Tx\mid Tx\rangle_A\\
 &=\langle T^\sharp Tx\mid x\rangle_A\\
  &\leq\|T^\sharp Tx\|_A\\
   &\leq\|T^2x\|_A.
\end{align*}
Hence, $T$ is $A$-paranormal as required.
\end{proof}

Now, we state our main result in this section.
\begin{theorem}\label{aparanormal01}
Let $T\in \mathcal{B}_{A^{1/2}}(\mathcal{H})$ be an $A$-paranormal operator. Then, $T$ is $A$-normaloid. Moreover, we have
\begin{equation}\label{aparequa}
r_A(T)=\omega_A(T)=\|T\|_A.
\end{equation}
\end{theorem}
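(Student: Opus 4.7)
The plan is to adapt the classical proof that paranormal Hilbert space operators are normaloid (cf.\ \eqref{inclusionsproper}) to the semi-Hilbertian setting, and then invoke Proposition~\ref{charactenormailoid} and Theorem~\ref{theonormaloid} to package the conclusion. The strategy is to establish $\|T^n\|_A = \|T\|_A^n$ for every $n$, via a log-convexity argument along orbits.

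As a preliminary step, I would first upgrade the defining inequality to the homogeneous form
\begin{equation*}
\|Tx\|_A^2 \leq \|T^2 x\|_A \, \|x\|_A, \qquad \forall\, x \in \mathcal{H}.
\end{equation*}
For $\|x\|_A \neq 0$ this follows immediately by applying the $A$-paranormal condition to $x/\|x\|_A$; when $\|x\|_A = 0$ one has $x \in \mathcal{N}(A) = \mathcal{N}(A^{1/2})$, and since $T(\mathcal{N}(A)) \subseteq \mathcal{N}(A)$ for every $T \in \mathcal{B}_{A^{1/2}}(\mathcal{H})$, both sides vanish.

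Next, I would fix $x \in \mathcal{H}$ with $\|x\|_A = 1$ and set $a_k := \|T^k x\|_A$ for $k \geq 0$. Substituting $T^{k-1}x$ into the homogeneous inequality gives
\begin{equation*}
a_k^2 \leq a_{k+1}\, a_{k-1}, \qquad k \geq 1,
\end{equation*}
so the sequence $(a_k)$ is log-convex. I claim this forces $a_n \geq a_1^n$ for every $n$. If $a_1 = 0$, the invariance $T(\mathcal{N}(A)) \subseteq \mathcal{N}(A)$ yields $a_n = 0$ for all $n \geq 1$, and there is nothing to prove. Otherwise, an easy induction using $a_{k+1} \geq a_k^2/a_{k-1}$ shows $a_k > 0$ for all $k$, and the log-convexity rewrites as $a_k/a_{k-1} \leq a_{k+1}/a_k$; thus the ratios $b_k := a_k/a_{k-1}$ are non-decreasing, and together with $a_0 = 1$ this gives
\begin{equation*}
a_n = b_n b_{n-1} \cdots b_1 \geq b_1^n = a_1^n.
\end{equation*}

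Finally, taking the supremum over all $x$ with $\|x\|_A = 1$ in $\|T^n x\|_A \geq \|T x\|_A^n$, and using \eqref{aseminorm}, gives $\|T^n\|_A \geq \|T\|_A^n$. The reverse inequality is \eqref{crucial0}, so $\|T^n\|_A = \|T\|_A^n$ for every $n \in \mathbb{N}^*$. The equivalence $(1) \Leftrightarrow (2)$ in Proposition~\ref{charactenormailoid} then shows that $T$ is $A$-normaloid, and \eqref{aparequa} follows from Theorem~\ref{theonormaloid}. The only delicate point is the bookkeeping around $\mathcal{N}(A)$ when passing from the unit-sphere inequality to the homogeneous one and when guaranteeing $a_k > 0$; both are handled cleanly by the invariance $T(\mathcal{N}(A)) \subseteq \mathcal{N}(A)$.
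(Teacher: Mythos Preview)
Your proof is correct and follows essentially the same route as the paper: both establish $\|T^nx\|_A \geq \|Tx\|_A^n$ for every $x$ in the $A$-unit sphere, take the supremum to get $\|T^n\|_A = \|T\|_A^n$, and then invoke Proposition~\ref{charactenormailoid} and Theorem~\ref{theonormaloid}. The only cosmetic difference is that the paper runs a direct induction (normalizing $Tx$ at each step), whereas you phrase the same orbit estimate via log-convexity of the sequence $a_k = \|T^kx\|_A$.
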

\begin{proof}
We shall prove by induction that
\begin{equation}\label{ineq014}
\|T^nx\|_A\geq \|Tx\|_A^n,\;\,\forall\, n\in \mathbb{N}^*,\;x\in S^A(0,1).
\end{equation}
 If $n=1$, then the property \eqref{ineq014} holds trivially. Let $n\in  \mathbb{N}^*$. Assume that $\|T^nx\|_A\geq \|Tx\|_A^n$ for all $x\in S^A(0,1)$ and we will show that $\|T^{n+1}x\|_A\geq \|Tx\|_A^{n+1}$ for all $x\in S^A(0,1)$. Let $x\in S^A(0,1)$ be such that $\|Tx\|_A\neq0$. Then, one can see that
 \begin{align*}
\|T^{n+1}x\|_A
& =\|T^n(\tfrac{Tx}{\|Tx\|_A})\|_A\|Tx\|_A \\
 &\geq \|T(\tfrac{Tx}{\|Tx\|_A})\|_A^n\|Tx\|_A\\
 &=\|T^2x\|_A^n \|Tx\|_A^{1-n}\\
 &\geq \|Tx\|_A^{2n} \|Tx\|_A^{1-n}=\|Tx\|_A^{n+1}.
\end{align*}
 Notice that if $x\in S^A(0,1)$ and satisfies $\|Tx\|_A=0$, then clearly $\|T^{n+1}x\|_A\geq\|Tx\|_A^{n+1}$. So, $\|T^{n+1}x\|_A\geq\|Tx\|_A^{n+1}$ for all $x\in S^A(0,1)$. Hence, \eqref{ineq014} is proved by the mathematical induction. Therefore, by taking the supremum over all $x\in S^A(0,1)$ in \eqref{ineq014}, we get $\|T^n\|_A\geq \|T\|_A^n$, for all $n\in \mathbb{N}^*$. On the other hand, clearly we have $\|T^n\|_A\leq\|T\|_A^n$, for all $n\in \mathbb{N}^*$. Thus, $\|T^n\|_A= \|T\|_A^n$, for all $n\in \mathbb{N}^*$. So, by applying Proposition \ref{charactenormailoid} we deduce that $T$ is $A$-normaloid. Moreover, \eqref{aparequa} follows immediately by using Theorem \ref{theonormaloid}.
\end{proof}

\begin{remark}
Since the $A$-normality implies the $A$-hyponormality, and an $A$-hyponormal operator is $A$-paranormal, then by taking into account Theorem \ref{aparanormal01} we get the following inclusions:
\begin{equation}\label{newinclusion}
\text{$A$-normal}\subsetneq\text{$A$-hyponormal}\subsetneq\text{$A$-paranormal}\subsetneq\text{$A$-normaloid}.
\end{equation}
Notice that it was shown in \cite{saddi} that if $T$ is $A$-normal, then $r_A(T)=\|T\|_A$. Further, if $T$ is $A$-normal, then $r_A(T)=\omega_A(T)$ (see \cite{fg}). On the other hand, by using \eqref{newinclusion} together with Theorem \ref{aparanormal01}, we obtain that $r_A(T)=\omega_A(T)=\|T\|_A$ for every $A$-normal operator $T$. So, an alternative proof of the above two results established in \cite{saddi,fg} is given.
\end{remark}

It is well known that every self adjoint operator is normal. However, an $A$-self adjoint operator is not in general neither $A$-normal nor $A$-hyponormal as it is shown in the following example.

\begin{example}
Let $A=\begin{pmatrix}2&0&2\\0&1&0\\2&0&2\end{pmatrix}$ and $T=\begin{pmatrix}1&0&1\\0&0&0\\0&0&0\end{pmatrix}$ be operators acting on $\mathbb{C}^3$. Observe that if $A$ is positive and $T$ is A-selfadjoint. A short calculation shows that $$T^\sharp=\begin{pmatrix}\frac{1}{2}&0&0\\0&0&0\\ \frac{1}{2}&0&0\end{pmatrix},\;T^\sharp T=\begin{pmatrix}\frac{1}{2}&0&\frac{1}{2}\\0&0&0\\\frac{1}{2}&0&\frac{1}{2}\end{pmatrix}\;\text{and}\;\; TT^\sharp= \begin{pmatrix}1&0&0\\0&0&0\\0&0&0\end{pmatrix}.$$
So, clearly $T$ is not $A$-normal. Moreover, it is not difficult to see that $T$ is also not $A$-hyponormal. However, an $A$-self-adjoint operator is $A$-paranormal as it it shown in the next proposition.
\end{example}

\begin{proposition}
Let $T \in \mathcal{B}(\mathcal{H})$ be an $A$-self-adjoint operator. Then $T$ is $A$-paranormal.
\end{proposition}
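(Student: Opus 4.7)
The plan is to unwind the definition of the $A$-seminorm on $Tx$, use the $A$-self-adjointness to bring one copy of $T$ over to the right-hand side of the form, and then close with the Cauchy--Schwarz inequality for the semi-inner product $\langle \cdot \mid \cdot \rangle_A$.

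First I would note that $A$-self-adjointness, $AT = T^{*}A$, implies $T \in \mathcal{B}_A(\mathcal{H}) \subseteq \mathcal{B}_{A^{1/2}}(\mathcal{H})$ (as already recorded in the preliminaries), so the statement $\|Tx\|_A^2 \leq \|T^2 x\|_A$ at least makes sense and $T^2$ is again $A$-bounded. Next, for $x \in S^A(0,1)$ I would compute
\begin{equation*}
\|Tx\|_A^2 = \langle Tx \mid Tx \rangle_A = \langle A T x \mid Tx \rangle = \langle T^{*} A T x \mid x \rangle.
\end{equation*}
Applying $AT = T^{*}A$ a second time to the inner factor gives $T^{*}AT = T^{*}(T^{*}A) = (T^{2})^{*} A$, hence
\begin{equation*}
\|Tx\|_A^2 = \langle (T^{2})^{*} A x \mid x \rangle = \langle A x \mid T^{2} x \rangle = \langle x \mid T^{2} x \rangle_A.
\end{equation*}
In particular $\langle x \mid T^{2}x \rangle_A$ is a nonnegative real number.

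The final step is the Cauchy--Schwarz inequality for the positive semi-definite sesquilinear form $\langle \cdot \mid \cdot \rangle_A$, which yields
\begin{equation*}
\|Tx\|_A^2 \;=\; \bigl|\langle x \mid T^{2} x \rangle_A\bigr| \;\leq\; \|x\|_A\,\|T^{2}x\|_A \;=\; \|T^{2}x\|_A,
\end{equation*}
since $\|x\|_A = 1$. This is exactly the $A$-paranormal inequality.

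I do not expect a genuine obstacle here: the only subtlety is that Cauchy--Schwarz for $\langle \cdot \mid \cdot \rangle_A$ is valid even though $A$ is only positive semi-definite (this is a standard fact for positive semi-definite sesquilinear forms, and has been used implicitly throughout the paper). Everything else is a one-line manipulation using $AT = T^{*}A$ twice. If one wanted to avoid quoting Cauchy--Schwarz one could replace the last step by the elementary inequality $|\langle Az \mid w\rangle|^2 \leq \langle Az \mid z\rangle \langle Aw \mid w\rangle$ with $z = x$, $w = T^{2}x$, but the proof above is the cleanest.
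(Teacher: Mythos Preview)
Your proof is correct and matches the paper's almost line for line: both reduce to the identity $\|Tx\|_A^2 = \langle x \mid T^2x\rangle_A$ via $AT = T^*A$ and then finish with the Cauchy--Schwarz inequality for the semi-inner product $\langle\cdot\mid\cdot\rangle_A$. One harmless quibble: the step $\langle ATx\mid Tx\rangle=\langle T^*ATx\mid x\rangle$ is just the ordinary adjoint identity, so $A$-self-adjointness is actually invoked only once (on the inner factor), not ``a second time.''
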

\begin{proof}
Let $x \in S^A(0,1)$. Then, by using the fact $T$ is $A$-self-adjoint and the Cauchy-Schwarz inequality, we see that
\begin{align*}
\|Tx\|^2
& = \langle ATx\mid Tx\rangle\\
 &=\langle T^*Ax\mid Tx\rangle\\
 &=\langle x\mid T^2x\rangle_A\\
   &\leq\|T^2x\|_A.
\end{align*}
\end{proof}
The following corollary is an immediate consequence of Proposition \ref{charactenormailoid}. and Theorem \ref{aparanormal01}.

\begin{corollary}\label{aself3}
Let $T \in \mathcal{B}(\mathcal{H})$ be an $A$-self-adjoint operator. Then, $T$ is $A$-normaloid and so it satisfies
$$r_A(T)=\|T\|_A=\omega_A(T).$$
\end{corollary}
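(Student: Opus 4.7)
The plan is to chain together the two immediately preceding results. The statement being proved is a direct consequence of the Proposition just above (showing that $A$-self-adjoint operators are $A$-paranormal) and Theorem \ref{aparanormal01} (showing that $A$-paranormal operators are $A$-normaloid and satisfy $r_A(T)=\omega_A(T)=\|T\|_A$), so all the real work has already been done.

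First I would verify that the hypotheses of Theorem \ref{aparanormal01} are met. The theorem applies to elements of $\mathcal{B}_{A^{1/2}}(\mathcal{H})$, and since $T$ is $A$-self-adjoint we have $T\in\mathcal{B}_A(\mathcal{H})$ (as recalled in the introduction), hence in particular $T\in\mathcal{B}_{A^{1/2}}(\mathcal{H})$ via the inclusion $\mathcal{B}_A(\mathcal{H})\subseteq\mathcal{B}_{A^{1/2}}(\mathcal{H})$. Next, the Proposition immediately preceding this corollary tells us that $T$ is $A$-paranormal. Applying Theorem \ref{aparanormal01} to $T$ then simultaneously yields that $T$ is $A$-normaloid and that the triple equality $r_A(T)=\omega_A(T)=\|T\|_A$ holds.

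I do not anticipate any obstacle since the corollary is explicitly flagged in the text as an immediate consequence of Proposition \ref{charactenormailoid} and Theorem \ref{aparanormal01}; the only subtle point worth spelling out is the membership $T\in\mathcal{B}_{A^{1/2}}(\mathcal{H})$, which is where the hypothesis of $A$-self-adjointness (rather than merely being in $\mathcal{B}(\mathcal{H})$) is genuinely used to invoke the theorem. The proof will therefore be a two-line citation of the two earlier results.
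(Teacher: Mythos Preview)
Your proposal is correct and matches the paper's own argument: the corollary is stated there as an immediate consequence of the preceding proposition (that $A$-self-adjoint operators are $A$-paranormal) together with Theorem~\ref{aparanormal01}. Your added remark that $A$-self-adjointness guarantees $T\in\mathcal{B}_A(\mathcal{H})\subseteq\mathcal{B}_{A^{1/2}}(\mathcal{H})$, so that Theorem~\ref{aparanormal01} indeed applies, is a useful clarification that the paper leaves implicit.
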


Notice that there are two other ways to prove that an $A$-self-adjoint operator is $A$-normaloid. Firstly, in view of \cite[Lemma 2.1.]{zamani1}, we have $\omega_A(T)=\|T\|_A$. Thus $T$ is $A$-normaloid by applying Proposition \ref{charactenormailoid}. Secondly, it was shown in \cite[Theorem 5.1.]{bakfeki04}, that if $T$ is an $A$-self-adjoint operator then, $\|T^n\|_A=\|T\|_A^n$ for all $n\in \mathbb{N}^*$. So, we get the desired property by using Proposition \ref{charactenormailoid}.


\end{document}